\theoremstyle{plain}
\newtheorem{theo}{Theorem}[subsection]
\newtheorem*{theo*}{Theorem}
\newtheorem{lem}[theo]{Lemma}
\newtheorem*{lem*}{Lemma}
\newtheorem{cor}[theo]{Corollary}
\newtheorem{prop}[theo]{Proposition}
\newtheorem*{prop*}{Proposition}
\newtheorem*{conj*}{Conjecture}
\theoremstyle{remark}
\newtheorem{rem}[theo]{Remark}
\theoremstyle{definition}
\newtheorem{defi}[theo]{Definition}
\numberwithin{equation}{subsection}
\DeclareMathAlphabet{\mathpzc}{OT1}{pzc}{m}{it}
\DeclareMathOperator{\NN}{\mathbb{N}}
\DeclareMathOperator{\QQ}{\mathbb{Q}}
\DeclareMathOperator{\HH}{\mathbb{H}}
\DeclareMathOperator{\aA}{\mathbb{A}}
\DeclareMathOperator{\OO}{\mathscr{O}}
\DeclareMathOperator{\Spec}{Spec}
\DeclareMathOperator{\EE}{\mathbf{E}}
\DeclareMathOperator{\edg}{\mathscr{E}}
\DeclareMathOperator{\F}{\mathscr{F}}
\DeclareMathOperator{\f}{F}
\DeclareMathOperator{\R}{R}
\DeclareMathOperator{\can}{can}
\DeclareMathOperator{\Isoc}{Isoc}
\DeclareMathOperator{\Cris}{Cris}
\DeclareMathOperator{\D}{\mathscr{D}}
\DeclareMathOperator{\sP}{sp}
\DeclareMathOperator{\Sp}{Sp}
\DeclareMathOperator{\Con}{Con}
\begin{document}

\selectlanguage{british}

\thispagestyle{empty}

\title{Comparison between Rigid and Overconvergent Cohomology with Coefficients}
\author{Veronika Ertl}
\date{}
\maketitle

\begin{abstract}
{\noindent
For a smooth scheme over a perfect field of characteristic $p>0$, we generalise a definition of Bloch and introduce overconvergent de Rham-Witt connections. This provides a tool to extend the comparison morphisms of Davis, Langer and Zink between overconvergent de Rham-Witt cohomology and Monsky-Washnitzer respectively rigid cohomology to coefficients. }
\end{abstract}
\vspace{-9pt}
\selectlanguage{french}
\begin{abstract}
{\noindent
Pour un sch\'ema lisse sur un corps parfait de caract\'eristique $p>0$, on g\'en\'eralise une d\'efinition de Bloch et introduit des connexions surconvergentes de de Rham-Witt. Cela fournit une possibilit\'e d'\'etendre \`a des coefficients convenable les morphismes de comparaison de Davis, Langer et Zink entre la cohomologie surconvergente de de Rham-Witt et la cohomologie de Monsky-Washnitzer respectivement la cohomologie rigide. 
\medskip\\
\textit{Key Words:} rigid cohomology, de Rham-Witt complex, coefficients\\
}
\end{abstract}

\selectlanguage{british}

\tableofcontents

\let\thefootnote\relax\footnote{\textit{Mathematics Subject Classification 2010:} 14F30, 14F40}

\newpage

\addcontentsline{toc}{section}{Introduction}

\section*{Introduction}

Let $X$ be a smooth variety over a perfect field $k$ of positive characteristic $p$. In \cite{Bloch3} Spencer Bloch studies connections on the de Rham--Witt complex of $X$ and establishes the following equivalence of categories.

\begin{theo*}[Bloch 2001, \protect{\cite[Thm. 1.1]{Bloch3}}]  The functor $\EE\mapsto(E,\nabla)$ defines an equivalence of category between locally free crystals on $X$ and locally free $W_X$-modules with quasi-nilpotent integrable connections. \end{theo*}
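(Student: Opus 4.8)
The plan is to recognise $\EE\mapsto(E,\nabla)$ as an incarnation of the classical dictionary between crystals and modules with integrable quasi-nilpotent connection, the role of the structure sheaf of an (ind\nobreakdash-)PD-thickening of $X$ being played by the sheaf of Witt vectors $W_X$, which is intrinsic to $X$. For the functor itself: the kernel $VW_{n-1}\OO_X$ of $W_n\OO_X\to\OO_X$ carries canonical divided powers (Illusie, Langer--Zink), so for an affine $X=\Spec A$ the ring $W_n(A)$ is an object of $\Cris(X/W_n)$; evaluating a locally free crystal $\EE$ there, sheafifying and passing to the limit over $n$ produces a locally free $W_X$-module $E=\varprojlim_n\EE_{W_n(A)}$. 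The crystal structure, read off the PD-envelopes of the diagonal in $W_n(A)\otimes_{W_n}W_n(A)$, is an HPD-stratification on $E$, hence an integrable connection; composing with the natural surjection from the PD-de Rham complex of $W_n(A)$ onto the de Rham--Witt complex $W_n\Omega^\bullet_X$ puts it in the form $\nabla\colon E\to E\otimes_{W_X}W\Omega^1_X$, and it is quasi-nilpotent precisely because a crystal corresponds to a topologically quasi-nilpotent stratification. The structural isogeny $F^{*}\EE\to\EE$ of an $F$-crystal transports, via the same evaluation, to the Frobenius structure $F^{*}(E,\nabla)\to(E,\nabla)$. All of this is functorial and independent of the affine chart, so the functor exists globally; faithfulness on morphisms is clear, since a morphism of crystals is determined by its values on the thickenings $W_n(A)$.

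For the quasi-inverse I would argue Zariski-locally on $X$ and choose a smooth formal $W$-scheme $\mathscr{Z}=\Spf B$ lifting $X=\Spec A$, together with a lift $\varphi\colon B\to B$ of Frobenius. By the classical equivalence on a smooth lift (Berthelot--Ogus, Grothendieck), crystals on $\Cris(X/W)$ are locally the same as $B$-modules with integrable quasi-nilpotent connection, an $F$-crystal adding a Frobenius structure relative to $\varphi$; so it is enough to produce, compatibly with the forward functor, a local equivalence $\Psi_{\varphi}$ between such $B$-modules-with-connection-and-Frobenius and locally free $W_X$-modules with quasi-nilpotent integrable connection and Frobenius structure. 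The natural candidate is built from the canonical ring homomorphism $B\to W(B)\to W(\OO_X)$ attached to $\varphi$ -- the unique section of $w_0$ whose ghost components are the powers of $\varphi$, which intertwines $\varphi$ with the Witt Frobenius $F$ -- together with the comparison of $W\Omega^\bullet_X$ with the $p$-adically completed de Rham complex of $\mathscr{Z}$; here the Frobenius datum is exactly what pins down the otherwise non-canonical $\varphi$. One then checks that $\Psi_{\varphi}$ is, up to canonical and cocycle-compatible isomorphism, independent of the pair $(\mathscr{Z},\varphi)$: two choices are intertwined by the Taylor-series isomorphism built from $\nabla$, which converges by topological quasi-nilpotence, is horizontal by integrability, and is Frobenius-compatible because both $\varphi$'s are matched with the single Frobenius structure on $(E,\nabla)$. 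The local equivalences then glue to the desired global one, under which the $F$-crystal structure and the Frobenius structure correspond.

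I expect the genuine obstacle to be this local equivalence $\Psi_{\varphi}$ -- equivalently, the statement that nothing is lost on passing from crystals to $W_X$-modules with connection, and that the passage can be reversed through an honest lift. It rests on a careful comparison of the de Rham--Witt differentials with the K\"ahler differentials of $W(\OO_X)$ and of the lift $B$, carried out over the ring $W(\OO_X)$, which is complete for the $p$-adic but not the $V$-adic topology, so that the convergence of the relevant Taylor series must be controlled in the right topology; and on verifying that the Frobenius datum on the $W_X$-side -- genuinely extra structure, since $W_X$ already has its own canonical Frobenius $F$ -- corresponds precisely to the $F$-isogeny on the crystalline side. Once $\Psi_{\varphi}$ and its independence are in hand, the remaining points (functoriality, compatibility with the PD-structures and the glueing cocycles, and the fact that the two functors are mutually quasi-inverse) are routine unwinding of the constructions.
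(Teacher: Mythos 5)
This is a cited result, not one the paper proves: the theorem is quoted as \cite[Thm.~1.1]{Bloch} and used as a black box throughout, so there is no internal proof to compare your plan against. The only meaningful benchmark is Bloch's own argument in the reference.

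Measured against that, your plan has the right skeleton. Evaluating a crystal on the canonical PD\nobreakdash-thickenings $\Spec A\hookrightarrow\Spec W_n(A)$, passing through the surjection $\Omega^1_{W_n(\OO_X)}\twoheadrightarrow W_n\Omega^1_{X/k}$ to land in the de Rham--Witt complex, transporting the $F$-isogeny, and then building a quasi-inverse out of a local smooth lift $B$ with Frobenius lift $\varphi$ via the canonical section $B\to W(B)\to W(\OO_X)$ --- this is indeed the architecture of Bloch's proof (and of the closely related constructions in Etesse). You also correctly identify that the Frobenius datum is what makes the quasi-inverse well-posed, and that the topology on $W(\OO_X)$ (complete $p$-adically, not $V$-adically) is where convergence must be watched.

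The genuine gap is the one you name yourself and then step around: the local equivalence $\Psi_\varphi$. That $\Psi_\varphi$ is an equivalence is not a preliminary step --- it \emph{is} the theorem, and nothing in the plan supplies it. Concretely, three things are missing. First, even the construction of $\Psi_\varphi$ needs care: you invoke ``the comparison of $W\Omega^\bullet_X$ with the $p$-adically completed de Rham complex of $\mathscr{Z}$,'' but that comparison is a quasi-isomorphism of complexes, whereas what one must produce is an honest map of modules $M\otimes_B\Omega^1_B\to (M\otimes_B W(\OO_X))\otimes W\Omega^1_X$ compatible with the connections; this comes from functoriality of Kähler differentials along $B\to W(\OO_X)\to W\Omega^0_X$ rather than from the quasi-isomorphism, and conflating the two will not give a functor at the level of modules. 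Second, full faithfulness of $\Psi_\varphi$ rests on a comparison with coefficients between $\widehat{\Omega}^\bullet_{B}$ and $W\Omega^\bullet_X$, which is substantive (Etesse-type work) and is not ``routine unwinding.'' Third, and most seriously, essential surjectivity --- descending a $W_X$-module with quasi-nilpotent connection and Frobenius structure back to a finitely generated $B$-module --- is precisely where the $\varphi$-semilinear Frobenius is indispensable, and you give no mechanism for it (one needs a Dieudonné/fixed-point-type argument exploiting the semilinearity together with $V$-adic approximation in $W(\OO_X)$). Until those are supplied, the proposal is a correct road map but not a proof.
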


In a corollary he states that this implies also an equivalence of categories between locally free $F$-crystals on $x$ and locally free $W_X$-modules with quasi-nilpotent integrable connection and a Frobenius structure.

The objects considered in this statement form a suitable family of coefficients for crystalline cohomology. Yet, we have to bear in mind, that crystalline cohomology is only a good integral model for rigid cohomology in the case of a proper variety $X$. The appropriate cohomology theory for non-proper varieties was constructed by Davis, Langer and Zink in \cite{DavisLangerZink} in form of the overconvergent de Rham--Witt complex. In an attempt to define coefficients for this overconvergent integral cohomology theory, we realise that Bloch's equivalence of categories provides an interesting approach to obtain overconvergent crystals. 

We define a subcategory of the de Rham--Wit connections from above's theorem, by taking modules over $W^\dagger(\OO_X)$ -- the overconvergent Witt vectors -- together with connections that take values in the overconvergent de Rham--Witt complex. It turns out, that this is a reasonable category of coefficients for the overconvergent cohomology theory. In particular, using this definition we are able to generalise the comparison theorems of Davis, Langer and Zink between overconvergent de Rham--Witt cohomology and Monsky--Washnitzer, respectively rigid cohomology, to coefficients. 

In Section \ref{Sec1} we start out by recalling some facts about crystals and their relation to de Rham--Witt connections as studied by Etesse and Bloch. In Section \ref{Subsec1.3} we define  overconvergent de Rham--Witt connections. 

Section \ref{Sec2} is devoted to the comparison between Monsky--Washnitzer and overconvergent cohomology with coefficients. Thence, we recall Monsky--Washnitzer algebras and define overconvergent connections on modules over them. We follow ideas of Davis, Langer and Zink in Section \ref{Subsec2.2} to compare Monsky--Washnitzer cohomology to overconvergent cohomology using the mentioned connections on the Monsky--Washnitzer side and overconvergent de Rham--Witt connections on the overconvergent side. 

This is also an important ingredient for the comparison between rigid cohomology and overconvergent cohomology with coefficients to which we turn in Section \ref{Sec3}. We recall Berthelot's notion of overconvergent isocrystals in Section \ref{Subsec3.1}. In Sections \ref{Subsec3.2} and \ref{Subsec3.3} we verify that the methods used by Davis, Langer and Zink for the comparison theorem without coefficients, carry over to the case of coefficients. Thus we obtain the following  result.
\begin{theo*}[Theorem \ref{Theo3.2.7}] Let $X$ be a smooth quasi-projective scheme over $k$, and $\edg\in\Isoc^\dagger(X/W(k)$ an isocrystal. Then there is a quasi-isomorphism
$$R\Gamma_{\text{rig}}(X,\edg)\rightarrow R\Gamma\left(X,\edg\otimes W^\dagger\Omega^{_{^\bullet}}_{X/k,\QQ} \right).$$
\end{theo*}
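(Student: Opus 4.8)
The plan is to reduce the global statement to the affine Monsky--Washnitzer situation treated in Section~\ref{Sec2}, and then to glue. Since $X$ is quasi-projective it is in particular separated, so we may fix a finite open cover $\mathfrak{U}=\{U_i\}_{i\in I}$ of $X$ by affines $U_i=\Spec A_i$, each admitting a smooth $W(k)$-lift whose weak completion $A_i^\dagger$ is a Monsky--Washnitzer algebra, chosen fine enough that every finite intersection $U_{i_0\cdots i_r}$ is again affine of this type. Both sides of the asserted morphism are Zariski hypercohomology of complexes of sheaves on $X$: the left-hand side computes $R\Gamma_{\text{rig}}(X,\edg)$ via the realisation of $\edg$ as an overconvergent de Rham complex on a strict neighbourhood (Section~\ref{Subsec3.1}), and the right-hand side is the de Rham--Witt complex $W^\dagger\Omega^\bullet_{X/k}\otimes\QQ$ twisted by the overconvergent de Rham--Witt connection $(E^\dagger,\nabla^\dagger)$ attached to $\edg$ --- an object that itself has to be produced by gluing the local data $(E_i^\dagger,\nabla_i^\dagger)$, where $(E_i^\dagger,\nabla_i^\dagger)$ is the overconvergent de Rham--Witt connection over $W^\dagger(A_i)$ associated, in the sense of Section~\ref{Subsec1.3}, to the Berthelot realisation $(M_i^\dagger,\nabla_i)$ of $\edg|_{U_i}$. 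So a first task is to check this gluing, and thereafter it suffices to compare the two complexes of sheaves, compatibly with the \v{C}ech restriction maps, over each $U_{i_0\cdots i_r}$.

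On a single affine $U=\Spec A$ of the cover with lift $A^\dagger$, Berthelot's theory attaches to $\edg|_U$ a finite projective $A^\dagger\otimes\QQ$-module $M^\dagger$ with an overconvergent integrable connection $\nabla$, and yields $R\Gamma_{\text{rig}}(U,\edg)\simeq\bigl(M^\dagger\otimes_{A^\dagger}\Omega^\bullet_{A^\dagger/W(k)}\bigr)\otimes\QQ$. The coefficient version of the Davis--Langer--Zink comparison, established in Section~\ref{Sec2}, then provides a natural quasi-isomorphism
$$\bigl(M^\dagger\otimes_{A^\dagger}\Omega^\bullet_{A^\dagger/W(k)}\bigr)\otimes\QQ\;\xrightarrow{\ \sim\ }\;\bigl(E^\dagger\otimes_{W^\dagger(A)}W^\dagger\Omega^\bullet_{A/k}\bigr)\otimes\QQ,$$
where $(E^\dagger,\nabla^\dagger)$ is the overconvergent de Rham--Witt connection attached to $(M^\dagger,\nabla)$. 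Composing with the realisation isomorphism on the left identifies, locally on each $U_i$ and each $U_{i_0\cdots i_r}$, the restriction of $R\Gamma_{\text{rig}}(X,\edg)$ with that of $\edg\otimes(W^\dagger\Omega_{X/k}\otimes\QQ)$; this is the local comparison map.

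To globalise, I would form the two \v{C}ech double complexes $\check{C}^\bullet(\mathfrak{U},-)$ associated to the two families of complexes of sheaves and verify that the local quasi-isomorphisms above commute with the restriction maps along $U_{i_0\cdots i_{r+1}}\hookrightarrow U_{i_0\cdots i_r}$. This reduces to the functoriality of all the ingredients: Berthelot's $(M^\dagger,\nabla)$ is compatible with localisation of $A^\dagger$; the assignment $(M^\dagger,\nabla)\mapsto(E^\dagger,\nabla^\dagger)$ of Section~\ref{Subsec1.3} is compatible with localisation, so that the sheaves $E_i^\dagger$ genuinely glue to a sheaf of $W^\dagger\OO_X\otimes\QQ$-modules with connection and the right-hand complex is well defined; and the Section~\ref{Sec2} comparison map is natural in the Monsky--Washnitzer datum and independent, up to homotopy, of the auxiliary choices (notably a lift of Frobenius on $A^\dagger$), exactly as in the constant-coefficient case. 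Granting these, the local maps form a morphism of \v{C}ech double complexes which is a quasi-isomorphism in each \v{C}ech degree, hence induces a quasi-isomorphism on total complexes; since the total complexes compute the respective hypercohomologies (the relevant sheaves behaving on the affines of $\mathfrak{U}$ exactly as in the constant-coefficient treatment of Davis--Langer--Zink), the theorem follows, and naturality in $\edg$ is visible from the construction.

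The step I expect to be the main obstacle is precisely this last compatibility: showing that the Section~\ref{Sec2} comparison --- and in particular the intermediate complexes interpolating between the Monsky--Washnitzer side and the de Rham--Witt side, which in the Davis--Langer--Zink argument involve a choice of Frobenius lift and a limit over truncation levels of the de Rham--Witt complex --- depends functorially enough on the pair $(U,\edg|_U)$ to descend to a statement about sheaves on $X$ and to commute with the \v{C}ech restrictions. The delicate point is independence of the Frobenius lift up to canonical homotopy, \emph{uniformly in the coefficient module}; once that is secured the gluing is formal. A secondary point to handle carefully is that the realisation of $\edg$ used to compute $R\Gamma_{\text{rig}}$ on the left agrees, over the overlaps $U_{i_0\cdots i_r}$, with the Monsky--Washnitzer presentations chosen on the individual $U_i$; this again follows from the functoriality of Berthelot's construction, but should be made explicit.
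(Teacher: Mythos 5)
Your overall strategy --- reduce to the affine Monsky--Washnitzer case treated in Section~\ref{Sec2}, then glue over a finite cover --- is the same as the paper's, and you correctly flag the independence of the Frobenius lift as a point that has to be controlled uniformly in the coefficient module (this is what Proposition~\ref{PropHomotopic} and Corollaries~\ref{CorChainHomo}, \ref{CorFunctorial} achieve). The gap is in the globalisation step, which you treat as essentially formal when it is in fact where most of the technical work lies.

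You assert that ``both sides of the asserted morphism are Zariski hypercohomology of complexes of sheaves on $X$'' and that the \v{C}ech total complex then ``computes the respective hypercohomologies exactly as in the constant-coefficient treatment.'' For the right-hand side this is fine. For the left-hand side it is precisely what has to be proved, and it is not automatic: $R\Gamma_{\text{rig}}(X,\edg)$ is defined as hypercohomology over a rigid analytic tube $\left]\overline{X}\right[$, not as Zariski hypercohomology on $X$, and rigid cohomology does not obviously satisfy \v{C}ech descent for Zariski covers of $X$. The paper bridges this in three steps that your proposal omits: (i) it passes to the dagger space $]X_J[^\dagger_{\widehat F_J}$ of Grosse-Kl\"onne attached to each Witt frame and pushes forward the twisted de Rham complex along the specialisation map $\sP$ to obtain an honest complex of Zariski sheaves on $X_J$; (ii) it proves an acyclicity statement for $R\sP_\ast$ over $\mathcal{Q}\times\breve{D}^n$ (Proposition~\ref{Prop435}, resting on Lemma~\ref{Prop437}), which is the place where the hypothesis that $\edg$ is \emph{locally free} enters and which makes $\sP_\ast\to R\sP_\ast$ a quasi-isomorphism; and (iii) it invokes the cohomological descent theorem of Chiarellotto and Tsuzuki \cite{ChiarellottoTsuzuki} to identify $R\epsilon_\ast R\sP_\ast$ of the simplicial complex on $\mathfrak{Q}_{^{_\bullet}}$ with a complex on $X$ whose hypercohomology is $R\Gamma_{\text{rig}}(X,\edg)$. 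Your \v{C}ech double complex is the simplicial argument in different notation, but without the dagger-space intermediate and the Chiarellotto--Tsuzuki input the identification of the total complex with rigid cohomology is unjustified. You would also need the factorisation through $R\Gamma$ on each strict neighbourhood (Lemma~\ref{LemComp}) before any \v{C}ech comparison can even be set up. In short: the ``secondary point'' you defer --- matching the rigid realisation of $\edg$ with the Monsky--Washnitzer data over the overlaps, sheaf-theoretically on $X$ --- is actually the primary one.
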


\subsection*{Acknowledgments}

I had the chance to talk to Christopher Davis, Kiran Kedlaya, Lance Miller, Wies\l{}awa Nizio\l{} and Liang Xiao about questions related to this article and I want to thank them here. Part of this work was done at the \'Ecole Normale Sup\'erieure in Lyon, where I was invited by Wies\l{}awa Nizio\l{}, and at the University of Utah, and I am very grateful for the support and hospitality of both institutions.

\section{Crystals and de Rham--Witt connections}\label{Sec1}

Let $X$ be a smooth variety over a perfect field $k$ of positive characteristic $p$. As usual denote by $W(k)$ the ring of $p$-typical Witt vectors of $k$, $W_n(k)$ the $n$-truncated Witt vectors, and throughout the paper, let $K$ be the fraction field of $W(k)$. A classical result in crystalline cohomology is the comparison with the hypercohomology of the de Rham--Witt complex. 

\begin{theo}[Illusie, \protect{\cite[II.1.4]{Illusie}}] For each $n\in \NN_0$ there is a canonical isomorphism between the crystalline cohomology $H{^{_{^\bullet}}}(X\slash W_n(k))$ and the hypercohomology of the de Rham--Witt complex $\HH^{_{^\bullet}}(X,W_n\Omega_{X/k}^{_{^\bullet}})$ which is compatible with the Frobenius action on both sides. More generally, there is a quasi-isomorphism 
$$\R u_\ast\OO_{X/W_n(k)}\xrightarrow{\sim} W_n\Omega_{X/k}^{_{^\bullet}}$$
of sheaves of $W_n(k)$-modules which is functorial in $X/k$. Here $u:(X/W_n(k))_{\text{cris}}\rightarrow X_{\text{Zar}}$ is the natural projection of topoi.\end{theo}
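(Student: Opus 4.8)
\emph{Idea of proof.} The plan is to work locally on $X_{\text{zar}}$ and to compare both complexes against the de Rham complex of a smooth lift. One first reduces to $X=\Spec A$ with $A$ smooth over $k$ and, after shrinking, étale over $k[T_1,\dots,T_d]$; such an $A$ lifts to a $p$-adically complete, formally smooth $W(k)$-algebra $\widehat A$ carrying a Frobenius lift $\phi$ (i.e.\ $\phi(x)\equiv x^p$ modulo $p$, with $\phi$ $\sigma$-semilinear over $W(k)$). Over this open the crystalline cohomology $\R u_\ast\OO_{X/W_n}$ is computed, by Berthelot's fundamental comparison theorem, by the divided-power de Rham complex $\Omega^\bullet_{D_n/W_n}$ of the PD-envelope $D_n$ of the closed immersion $X\hookrightarrow\Spec(\widehat A/p^n)$, which is cut out by $p$. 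It therefore suffices to construct a natural quasi-isomorphism $\Omega^\bullet_{D_n/W_n}\to W_n\Omega^\bullet_{A/k}$ and to globalise it.

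For the map: a Frobenius lift on the $p$-torsion-free ring $\widehat A$ determines a ring homomorphism $s_\phi\colon\widehat A\to W(\widehat A)$, uniquely characterised by the requirement that its ghost components be $(a,\phi a,\phi^2 a,\dots)$; composing with $W(\widehat A)\to W(A)$ and reducing modulo $p^n$ gives a $W_n(k)$-algebra map $\widehat A/p^n\to W_n(A)$. Because $\phi(p)=p$, the element $s_\phi(p)$ maps to $0$ in $W_1(A)=A$ (the target has characteristic $p$), so it lies in the canonical PD-ideal $VW_{n-1}(A)=\Ker\bigl(W_n(A)\to A\bigr)$; by the universal property of the PD-envelope the map extends to a PD-morphism $\OO_{D_n}\to W_n(A)$. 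Differentiating and composing with the tautological surjection $\Omega^\bullet_{W_n(A)/W_n(k)}\twoheadrightarrow W_n\Omega^\bullet_{A/k}$ (legitimate because $s_\phi$ is compatible with $F$ and $V$) produces the desired morphism of complexes of $W_n(k)$-modules.

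That this morphism is a quasi-isomorphism I would prove by a double dévissage. Along $n$: the restriction $R\colon W_n\Omega^\bullet\twoheadrightarrow W_{n-1}\Omega^\bullet$ has kernel the subcomplex $\Fil^{n-1}W_n\Omega^\bullet$, whose graded pieces are built from $\Omega^\bullet_{X/k}$ via $V^{n-1}$ and $dV^{n-1}$, and the crystalline side carries the parallel exact triangle relating $\R u_\ast\OO_{X/W_n}$, $\R u_\ast\OO_{X/W_{n-1}}$ and a twist of $\R u_\ast\OO_{X/k}\cong\Omega^\bullet_{X/k}$; the comparison map respects both, which reduces us to $n=1$ -- Grothendieck's classical identification of crystalline with de Rham cohomology. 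To see that the comparison map also matches the intermediate graded pieces, one reduces further by étale base change (both functors commute with the localisation $k[\underline T]\to A$) to $X=\aA^d_k$, where $W_n\Omega^\bullet$ has Illusie's explicit presentation and the map can simply be written down and checked. Finally, to globalise one shows the locally constructed map is, in the derived category of sheaves of $W_n(k)$-modules on $X_{\text{zar}}$, independent of the choices $(\widehat A,\phi)$: two choices are dominated by a common refinement, and the two induced maps on PD-de Rham complexes are chain homotopic by the standard path-object argument; hence the local maps glue, functorially in $X/k$.

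The step I expect to be the genuine obstacle is making everything canonical \emph{and} Frobenius-equivariant simultaneously. The homotopy coherence needed for the gluing, and the verification that the Witt-vector operator $F\colon W_n\Omega^\bullet\to W_{n-1}\Omega^\bullet$ -- which lowers the truncation level -- corresponds under $s_\phi$ to the crystalline Frobenius, must be carried through the dévissage at the same time; for the Frobenius compatibility one passes to the projective system in $n$, identifies the derived limit $\varprojlim_n\R u_\ast\OO_{X/W_n}$ with the de Rham--Witt complex $W\Omega^\bullet_{X/k}$, and uses the identity $F\circ s_\phi=s_\phi\circ\phi$ together with the fact that $\phi$ reduces to the absolute Frobenius of $X$. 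The remaining inputs -- the local reduction, the explicit computation over $\aA^d_k$, and the case $n=1$ -- are either formal or a finite calculation.
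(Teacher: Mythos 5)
The paper does not give a proof of this statement: it is recalled as background and attributed verbatim to Illusie \cite[II.1.4]{Illusie}. So there is no ``paper's own proof'' to compare against; I will instead comment on your sketch as a reconstruction of Illusie's argument.

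Your overall strategy -- lift locally, use a Frobenius lift $\phi$ to produce the Cartier homomorphism $s_\phi\colon\widehat A\to W(\widehat A)$, note that $s_\phi(p)$ dies in $A=W_1(A)$ and hence lands in the PD-ideal $VW_{n-1}(A)$, extend by the universal property of the PD-envelope, then differentiate -- is indeed the spine of Illusie's construction of the comparison map, and those steps are correct. The genuine gap is in the dévissage. You write that the kernel $\Fil^{n-1}W_n\Omega^\bullet$ of $R$ ``is built from $\Omega^\bullet_{X/k}$ via $V^{n-1}$ and $dV^{n-1}$'' and that ``the crystalline side carries the parallel exact triangle'' with the comparison map respecting both, reducing to $n=1$. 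This glosses over the hardest part of Illusie's II.1. On the crystalline side the cone of $\R u_\ast\OO_{X/W_n}\to\R u_\ast\OO_{X/W_{n-1}}$ is \emph{not} a shift of $\R u_\ast\OO_{X/k}$, because $\OO_{X/W_n}$ has $p$-torsion and the multiplication-by-$p^{n-1}$ map is not injective; on the Witt side, $\Fil^{n-1}W_n\Omega^\bullet$ is an extension involving $V^{n-1}$ and $dV^{n-1}$ only up to the precise boundary relations, and identifying its cohomology requires the inverse Cartier isomorphism and the exact sequences that Illusie develops at length before II.1.4. Neither filtration is the naive $p$-adic one, and matching them is a substantial computation, not a formal compatibility. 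Similarly, your ``one reduces to $\aA^d_k$, where the map can simply be written down and checked'' compresses a multi-page basic-Witt-differential calculation. Finally, to globalise Illusie does not rely on an ad hoc path-object homotopy: he uses a \v{C}ech-simplicial (hypercovering) argument precisely because one otherwise has to produce higher homotopy coherences by hand. Your proposal correctly identifies the shape of the argument but underestimates where the real work is; the dévissage, not the Frobenius compatibility, is the bottleneck.
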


This induces an isomorphism
$$H{^{_{^\bullet}}}(X/W(k))\cong \HH^{_{^\bullet}}(X,W\Omega_{X/k}^{_{^\bullet}}).$$

It is natural to generalise this to crystals.

\subsection{From crystals to connections}

It is well-known that crystals and connections are related.

\begin{theo}[Berthelot-Ogus, \protect{\cite[Theo. 6.6]{BerthelotOgus}}] Let $S$ be a PD-scheme and $X\hookrightarrow Y$ a closed immersion of $S$-schemes with $Y/S$ smooth, then the following categories are equivalent:
\begin{enumerate}\item The category of crystals of $\OO_{X/S}$-modules on $\Cris(X/S)$.
\item The category of $\D_X(Y)$-modules together with a hyper PD-stratification compatible with the canonical one of $\D_X(Y)$.
\item The category of $\D_X(Y)$-modules together with an integrable, quasi-nilpotent connection, compatible with the canonical one on $\D_X(Y)$.\end{enumerate}
\end{theo}
Here $\D_X(Y)$ denotes the structure sheaf of the PD-envelope of $X$ in $Y$. Its canonical hyper PD-stratification is a natural isomorphism
$$\varepsilon: \D_Y(Y\times_S Y)\otimes \D_X(Y)\xrightarrow{\sim} \D_X(Y)\otimes \D_Y(Y\times_S Y)$$
(see \cite[Def. 4.3H]{BerthelotOgus}) where $\D_Y(Y\times_S Y)$ is the structure sheaf of the first infinitesimal neighbourhood of the diagonal. This in turn  corresponds to the mentioned canonical integrable, quasi-nilpotent (in the sense of \cite[Def. 4.10 and 4.14]{BerthelotOgus}) connection
$$\nabla: \D_X(Y)\rightarrow \D_X(Y)\otimes \Omega^1_{Y/S}.$$

Now we get back to our original set-up. Bloch explains how to use the above result with $S=W(k)$ in order to associate to a crystal $\EE$ on $X/W$ a de Rham--Witt connection. One obtains a pro-system of Zariski sheaves on $X$ by setting for each $n\in\NN$ 
$$E_n=E_{W_n(\OO_X)}: U \mapsto \EE(U,\Spec(\Gamma(U,W_n(\OO_X))), \can).$$
To obtain the additional structure of a connection, let $(U,V,\gamma)$ be an object of the crystalline site of $X$. By definition, there is a diagram of PD-morphisms
$$\xymatrix{U \ar@{^{(}->}[r] \ar@2{-}[d] & D_V(V\times V) \ar@<.5ex>[d]^{p_2} \ar@<-.5ex>[d]_{p_1} \\
U \ar@{^{(}->}[r] & V}$$
and by the crystalline property this gives a hyper PD-stratification
$$p_2^\ast(\EE_V)\cong \EE_{D_V(V\times V)} \cong p_1^\ast(\EE_V).$$
The difference of these two morphisms then provides an integrable connection
$$\nabla: \EE_V\rightarrow \EE_V\otimes \Omega^1_{V/W_n(k),\gamma},$$
where $\Omega^{_{^\bullet}}_{V/W_n(k),\gamma}$ is the PD-de Rham complex from \cite[Lem. II.1.1.4]{Etesse}. This induces a connection $\nabla_n: E_n\rightarrow E_n\otimes \Omega^1_{W_n(\OO_X),\can}$, and passing to the quotient gives a unique differential graded module with underlying graded  $W_n\Omega^{_{^\bullet}}_{X/k}$-module $E_n\otimes_{W_n(\OO_X)}W_n\Omega^{_{^\bullet}}_{X/k}$ where the differential is given by
$$E_n\xrightarrow{\nabla}E_n\otimes \Omega^1_{W_n(\OO_X),\can}\rightarrow E_n\otimes W_n\Omega_{X/k}^1.$$

\begin{defi} 
The complex $E_{n}\otimes W_n\Omega^{_{^\bullet}}_{X/k}$ is called the $n$-truncated de Rham--Witt complex with coefficients in the crystal $\EE$. The induced pro-complex 
$$E_{^{_\bullet}}\otimes W_{^{_\bullet}}\Omega^{_{^\bullet}}_{X/k}$$
is the de Rham--Witt pro-complex with coefficients in the crystal $\EE$. Taking limits, we obtain the de Rham--Witt complex with coefficients in the crystal $\EE$.
\end{defi}

Etesse shows in \cite[Prop. II.1.2.7]{Etesse} that if $\EE$ is locally free of finite type, this limit is compatible with the tensor product of $E=\varprojlim E_n$ and $W\Omega^{_{^\bullet}}_{X/k}$ in the sense that
$$E\otimes_{W(\OO_X)}W\Omega_{X/k}^i\xrightarrow{\sim} \varprojlim\left(E_n\otimes_{W_n(\OO_X)}W_n\Omega_{X/k}^i\right)$$
is an isomorphism.
His main result is the following.

\begin{theo}[Etesse 1988, \protect{\cite[Thm. II.2.1]{Etesse}}] Let $u_n:(X/W_n(k))_{\text{cris}}\rightarrow X_{\text{zar}}$ be the canonical projection of topoi and $\EE$ a locally free crystal on $X/W_n(k)$. Then the morphism
$$\R u_n(\EE)\rightarrow E_n\otimes W_n\Omega_{X/k}^{_{^\bullet}}$$
is an isomorphism. \end{theo}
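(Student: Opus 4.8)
The plan is to reduce the assertion to Illusie's coefficient-free comparison (recalled above) by a local computation, using the Berthelot--Ogus description of crystals and the crystal property. Since both $\R u_n(\EE)$ and $E_n\otimes W_n\Omega^{\bullet}$ are complexes of sheaves on $X_{\text{zar}}$ and the claim is that the natural map between them is a quasi-isomorphism, the question is Zariski-local on $X$. So I may assume $X=\Spec A$ is affine and small enough to admit a closed immersion into a smooth affine formal $W(k)$-scheme $Z=\Spf R$, with $R$ a $p$-adically complete flat $W(k)$-algebra such that $R/pR=A$; write $R_n=R/p^nR$ and fix a lift $\phi\colon R\to R$ of the absolute Frobenius of $A$. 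By \cite[Theorem 6.6]{BerthelotOgus} together with the crystalline Poincar\'e lemma, the crystal $\EE$ corresponds to a locally free $R$-module with integrable, quasi-nilpotent connection $(M,\nabla_M)$ compatible with the canonical connection on the PD-envelope, and $\R u_n(\EE)$ is computed by the de Rham complex $M_n\otimes_{R_n}\Omega^{\bullet}_{R_n/W_n}$, where $M_n=M/p^nM$.

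Next I build the bridge to the de Rham--Witt side. The Frobenius lift $\phi$ determines the unique ring homomorphism $s_\phi\colon R\to W(R)$ whose Witt components satisfy $w_i\circ s_\phi=\phi^i$; composing with the reduction $W(R)\to W(A)$ and truncating gives a ring homomorphism $\tau_n\colon R_n\to W_n(A)$. One checks that $\tau_n$ is a PD-morphism of thickenings of $X$ over $W_n$ --- the kernel of $W_n(A)\to A$ carries its canonical divided powers and $w_0\circ\tau_n$ is the reduction $R_n\to A$ --- so it defines a morphism $(\Spec W_n(A))\to(\Spec R_n)$ in $\Cris(X/W_n)$. By the crystal property, the $W_n(\OO_X)$-module attached to $\EE$ is canonically $E_n\cong M_n\otimes_{R_n}W_n(A)$, and (using the quasi-nilpotence of $\nabla_M$ to make the Taylor series comparing the two PD-stratifications converge at each finite level) its de Rham--Witt connection is the pullback of $\nabla_M$ along $\tau_n$. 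Hence $E_n\otimes_{W_n(\OO_X)}W_n\Omega^{\bullet}$ is identified, compatibly with differentials, with $M_n\otimes_{R_n}W_n\Omega^{\bullet}_A$, and under these identifications the map of the theorem becomes $M_n\otimes_{R_n}\Omega^{\bullet}_{R_n/W_n}\to M_n\otimes_{R_n}W_n\Omega^{\bullet}_A$, induced by $\tau_n$ and the coefficient-free comparison $\Omega^{\bullet}_{R_n/W_n}\to W_n\Omega^{\bullet}_A$ of \cite{Illusie}.

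It remains to see that this last map is a quasi-isomorphism, which I would do by d\'evissage. Equip $W_n\Omega^{\bullet}_A$ with a suitably normalized version of the standard filtration $V^sW_{n-s}\Omega_A+dV^sW_{n-s}\Omega_A$ and $\Omega^{\bullet}_{R_n/W_n}$ with the $p$-adic filtration, arranged so that both $\tau_n$ and the Illusie map are filtered; on the associated graded the differentials become $A$-linear, and by the Cartier-type description already entering the coefficient-free case the induced map $\gr\bigl(\Omega^{\bullet}_{R_n/W_n}\bigr)\to\gr\bigl(W_n\Omega^{\bullet}_A\bigr)$ is a quasi-isomorphism of complexes of $A$-modules. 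Since $M_n$ is locally free, it (and each of its graded pieces) is flat, so tensoring preserves this quasi-isomorphism; as the complexes are bounded and the filtrations finite, the comparison map is a quasi-isomorphism, and the local statements glue automatically, since one only needs an isomorphism on cohomology sheaves. Alternatively one can induct on $n$ via the exact sequences $0\to W_{n-1}\Omega^{\bullet}_A\xrightarrow{V}W_n\Omega^{\bullet}_A\to\Omega^{\bullet}_A\to 0$ and multiplication by $p$ on $\Omega^{\bullet}_{R/W_{\bullet}}$, the base case $n=1$ being the Cartier isomorphism with coefficients.

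The main obstacle is precisely the interaction of the connection with the filtration on the de Rham--Witt complex: $\nabla$ is only $W_n$-linear and $\nabla(Vx)$ introduces $dV$-terms, so one must check that the filtration is preserved up to a controlled shift and that the induced maps on graded pieces are exactly the coefficient-free de Rham--Witt differentials tensored with $\gr M_n$. This, together with verifying that transporting $\nabla_M$ through $\tau_n$ indeed recovers the de Rham--Witt connection (where quasi-nilpotence is used), is the technical heart of the argument, after which the reduction to Illusie's theorem is formal.
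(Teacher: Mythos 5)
The paper does not actually prove this statement: it is recalled from \cite{Etesse} with a citation and no argument, so there is no internal proof to compare against. That said, your blind sketch follows the route Etesse himself takes (building on Illusie's trivial-coefficient comparison): work Zariski-locally, choose a smooth affine lift $R$ with a Frobenius lift $\phi$, use the associated Cartier-type homomorphism $\tau_n\colon R_n\to W_n(A)$ to realise the crystal on the thickening $W_n(A)$ as the base change of $(M_n,\nabla_M)$, replace $\R u_n(\EE)$ by $M_n\otimes_{R_n}\Omega^\bullet_{R_n/W_n}$ via the crystalline Poincar\'e lemma, and reduce to the coefficient-free quasi-isomorphism by a filtered d\'evissage, using flatness of $M_n$ to carry the quasi-isomorphism across the tensor product. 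This is the right skeleton.

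Two points deserve correction or more care. First, the ``alternative'' induction you propose using sequences $0\to W_{n-1}\Omega^{\bullet}_A\xrightarrow{V}W_n\Omega^{\bullet}_A\to\Omega^{\bullet}_A\to 0$ is not exact as written: $V$ is injective but its cokernel is not $\Omega^\bullet_A$, since the kernel of $W_n\Omega^\bullet_A\to\Omega^\bullet_A$ is the full $\Fil^1=VW_{n-1}\Omega+dVW_{n-1}\Omega$, which strictly contains $VW_{n-1}\Omega$; the correct d\'evissage is along the standard filtration $\Fil^s=V^sW_{n-s}\Omega+dV^sW_{n-s}\Omega$ and uses Illusie's description of $\gr^s$, not a single short exact sequence with $V$. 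Second, you rightly single out as ``the technical heart'' the verification that the twisted differential $\nabla_M\otimes 1+1\otimes d$ preserves the filtration and induces on the associated graded the coefficient-free de Rham--Witt differentials tensored against $\gr M_n$; this is exactly where Etesse's argument does nontrivial work (one must check that $\Fil^s$ is a differential graded ideal stable under the $W_n(A)$-action via $\tau_n$, and that after passing to $\gr$ the connection factors through $\overline{M}=M/pM$), and as written your sketch stops short of carrying this out. As a roadmap to a cited theorem it is a reasonable and essentially correct reconstruction, but it should not be read as a complete proof.
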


If $X/k$ is in addition proper and $\EE$ of finite type, the previous theorem induces an isomorphism
$$H_{\text{cris}}^{_{^\bullet}}(X/W(k),\EE)\cong\HH^{_{^\bullet}}(X,E\otimes W\Omega^{_{^\bullet}}_{X/k}).$$

Bloch's discussion of the functor $\EE\mapsto(E,\nabla)$ establishes an equivalence of categories between locally free crystals on $X/W(k)$ and locally free $W_X$-modules with a quasi-nilpotent integrable connection. The property of being quasi-nilpotent is defined to be quasi-nilpotent in the sense of \cite[Def. 4.10 and 4.14]{BerthelotOgus} modulo $p$. In the next section, we will introduce a sub-category of the latter category. 

\subsection{Overconvergent de Rham--Witt connections}\label{Subsec1.3}

In \cite{DavisLangerZink2} C. Davis, A. Langer and T. Zink define for a finitely generated $k$-algebra $\overline{A}$ the ring of overconvergent Witt vectors $W^\dagger(\overline{A})\subset W(\overline{A})$ as subring of the usual Witt vectors. In the subsequent paper \cite{DavisLangerZink} they extend this to the definition of an overconvergent de Rham--Witt complex $W^\dagger\Omega^{_{^\bullet}}_{\overline{A}/k}\subset W\Omega^{_{^\bullet}}_{\overline{A}/k}$ which can in fact be globalised to a sheaf on a smooth variety $X$ over $k$, and so can of course $W^\dagger$. 

Let $E$ be a $W^\dagger(\OO_X)$-module on $X$. 

\begin{defi} 
An overconvergent de Rham--Witt connection on $E$ will be a map
$$\nabla: E\rightarrow E\otimes_{W^\dagger}W^\dagger\Omega^1_{X/k}$$
which satisfies the Leibniz rule
$$\nabla(\omega e)=\omega\nabla(e) + e\otimes d\omega.$$
\end{defi}

This definition makes sense, as by construction, the differential of the de Rham--Witt complex takes overconvergent elements to overconvergent elements. 
\begin{defi}
As usual, we say that an overconvergent connection is integrable if
$$\nabla^2=0.$$
It is quasi-nilpotent if it is quasi-nilpotent modulo $p$ in the sense of \cite[Def. 4.10 and 4.14]{BerthelotOgus}.
\end{defi}
A morphism 
$$\psi:(E_1,\nabla_{E_1})\rightarrow (E_2,\nabla_{E_2})$$
of $W^\dagger(\OO_X)$-modules with integrable connections is a morphism of $W^\dagger(\OO_X)$-modules compatible on each side with the connection, or in other words, it is a morphism of complexes
$$E_1\otimes W^\dagger\Omega^{_{^\bullet}}_{X/k}\rightarrow E_2\otimes W^\dagger\Omega^{_{^\bullet}}_{X/k}$$
where the boundary maps are given by the connections. 

We denote the category of locally free quasi-nilpotent integrable de Rham--Witt connections by $\Con(X/W(k))$ and the faithful subcategory of locally free quasi-nilpotent integrable overconvergent de Rham--Witt connections by $\Con^\dagger(X/W(k))$.

\begin{rem}\label{rem132} It is not clear that the category of overconvergent de Rham--Witt connections forms a full subcategory of the category of convergent de Rham--Witt connections. However, analogous to isocrystals, where one defines $F$-isocrystals, one can define $F$-de Rham--Witt connections, denoted by $F$-$\Con(X/W(k))$, which correspond to $F$-crystals as defined in \cite{Illusie} and overconvergent $F$-de Rham--Witt connections denoted by $F$-$\Con^\dagger(X/W(k))$. With this extra structure, we show in \cite{Ertl3} that the forgetful functor
$$j^\ast: \f\text{-}\Con^\dagger(X/W(k))\rightarrow\f\text{-}\Con(X/W(k))$$
is fully faithful.\end{rem}

We restrict our attention now to locally free $W^\dagger(\OO_X)$-modules with overconvergent integrable quasi-nilpotent connections $(E,\nabla)$. 

\begin{rem}In light of Remark \ref{rem132} and taking into account that Bloch established an equivalence of categories between $\Con(X/W)$ and the category of locally free crystals on $X$, one would like to have a suitable notion of overconvergent crystals on $X$.\end{rem}

\section{Overconvergent de Rham--Witt and Monsky--Washnitzer cohomology with coefficients}\label{Sec2}

Let $X/k$ be as before. The goal of this section is to extend the comparison morphism of Davis, Langer and Zink between overconvergent de Rham--Witt and Monsky--Washnitzer algebra to coefficients. This lies the ground work for the comparison between overconvergent de Rham--Witt and rigid cohomology.

\subsection{Monsky--Washnitzer algebras and overconvergent connections}\label{Subsec2.1}

For a complete discrete valuation ring $R$ and a quotient of a polynomial algebra $B=R[t_1,\ldots,t_n]/(f_1,\ldots,f_m)$ the associated Monsky--Washnitzer algebra is according to \cite{vanderPut}
$$B^\dagger=R\langle t_1,\ldots,t_n\rangle^\dagger/(f_1,\ldots,f_m),$$
where $R\langle t_1,\ldots,t_n\rangle^\dagger$ consists of those power series $\sum_{\alpha}a_\alpha t^\alpha$ in $R \llbracket t_1,\ldots,t_n\rrbracket$ such that for some $C>0$ and $0<\rho<1$ 
$$|a_\alpha|\leqslant C\rho^{|\alpha|}$$
for all coefficients. The elements are called overconvergent power series and can be characterised by the fact that they are converging in a poly-disc with radii $\rho_i>1$. 

The algebra $R\langle t_1,\ldots,t_n\rangle^\dagger$ satisfies Weierstra\ss{} preparation and division and has the Artin approximation property. 

We turn now to the special case, when $R=W(k)$ for the field $k$ above. The notation is as in \cite{Davis}. Let $\Spec(\overline{B})$ be a smooth affine variety over $k$. There exists a smooth lift of the form $B=W(k)[x_1,\ldots,x_n]/(f_1,\ldots,f_m)$ (cf.  \cite[p. 35]{vanderPut}) and we consider the associated Monsky--Washnitzer algebra
$$B^\dagger:=W(k)\langle x_1,\ldots,x_n\rangle^\dagger/(f_1,\ldots,f_m).$$

Let $\Omega^{_{^\bullet}}_{B^\dagger/W(k)}$ be the module of continuous differentials of $B^\dagger$ over $W(k)$. The Monsky--Washnitzer cohomology of $\Spec \overline{B}$ is then calculated by the rational complex $\Omega^{_{^\bullet}}_{B^\dagger/W(k),\QQ}:=\Omega^{_{^\bullet}}_{B^\dagger/W(k)}\otimes\QQ$
$$H_{\text{MW}}^i(\overline{B}/K)=\HH^i(\Omega^{_{^\bullet}}_{B^\dagger/W(k),\QQ}).$$
These notions are well-defined, independent of the choice of representation, and functorial.

To introduce coefficients to this cohomology theory, in the spirit of overconvergent crystals, we make the following definition.

\begin{defi}\label{MWIntCon} 
Let $E_{B^\dagger}$ be a free $B^\dagger$-module. A connection on $E_{B^\dagger}$ is a map
$$\nabla_{B^\dagger}:E_{B^\dagger}\rightarrow E_{B^\dagger}\otimes_{B^\dagger}\Omega^1_{B^\dagger/W(k)}$$
satisfying the Leibniz rule $\nabla_{B^\dagger}(we)=w\nabla_{B^\dagger}(e)+e\otimes dw$. It is said to be integrable if $\nabla_{B^\dagger}^2=0$ as usual. It is called quasi-nilpotent, if it is quasi-nilpotent modulo $p$ in the sense of \cite[Def. 4.10 and 4.14]{BerthelotOgus}.
\end{defi}
The Monsky--Washnitzer cohomology of $\Spec\overline{B}$ with coefficients in $E_{B^\dagger}$ is then
$$H_{\text{MW}}^i(\overline{B}/K,E_{B^\dagger})=\HH^i(E_{B^\dagger}\otimes\Omega^{_{^\bullet}}_{B^\dagger/W(k),\QQ}).$$

\subsection{Comparison with overconvergent cohomology}\label{Subsec2.2}

Let $W^\dagger\Omega^{_{^\bullet}}_{\overline{B}/k}$ be the overconvergent subcomplex of the de Rham--Witt complex as defined in \cite[Chapter 3]{Davis} or \cite[Section 3]{DavisLangerZink}. For a smooth lift of the Frobenius $F:B^\dagger\rightarrow B^\dagger$, which always exists, there is a monomorphism
$$t_F:B^\dagger\rightarrow W(\overline{B})$$
which has in fact image in the overconvergent subring $W^\dagger(\overline{B})\subset W(\overline{B})$ and induces by the universal property of K\"a{}hler differentials and functoriality a comparison map
$$t_F:\Omega^{_{^\bullet}}_{B^\dagger/W(k)}\rightarrow W^\dagger\Omega^{_{^\bullet}}_{\overline{B}/k}.$$
Although this depends a priori on the choice of Frobenius lift, Davis proves in \cite[Cor. 4.1.11]{Davis} (see also \cite[Prop. 3.26]{DavisLangerZink} that for standard \'etale affines it is independent in the derived category. 

The main result of Davis, Langer and Zink \cite[Prop. 3.23]{DavisLangerZink} regarding this comparison morphism is that for an arbitrary smooth algebra $B$ the kernel and cokernel of the induced homomorphism on cohomology is annihilated by $p^{2\kappa}$, where $\kappa=\lfloor\log_p(\dim \overline{B})\rfloor$. As a corollary they obtain the following result.

\begin{cor}[Davis--Langer--Zink 2011 \protect{\cite{DavisLangerZink}}]\renewcommand{\labelenumi}{(\alph{enumi})} \begin{enumerate}\item Let $\dim \overline{B}<p$. Then the induced map on cohomology $t_{F\ast}$ is an isomorphism.
\item In general, there is a rational isomorphism
$$H_{\text{MW}}^\ast(\overline{B}/K)\cong\HH^\ast(W^\dagger\Omega^{_{^\bullet}}_{\overline{B}/k,\QQ}),$$
between Monsky--Washnitzer cohomology and rational overconvergent de Rham--Witt cohomology.\end{enumerate}\end{cor}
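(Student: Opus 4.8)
The plan is to obtain both parts as formal consequences of the estimate recalled just above from \cite[Proposition 3.24]{DavisLangerZink}: for any smooth lift $B$ of $\overline{B}$ the comparison homomorphism $t_{F\ast}\colon\HH^\ast(\Omega_{B^\dagger/W(k)})\to\HH^\ast(W^\dagger\Omega_{\overline{B}/k})$ has kernel and cokernel annihilated by $p^{2\kappa}$, where $\kappa=\lfloor\log_p(\dim\overline{B})\rfloor$. Here the cohomology groups are simply the cohomology of the cochain complexes of $W(k)$-modules attached to the affine scheme $\Spec\overline{B}$ --- the de Rham complex $\Omega_{B^\dagger/W(k)}$ of continuous differentials on one side and the overconvergent de Rham--Witt complex $W^\dagger\Omega_{\overline{B}/k}$ on the other --- so no genuine hypercohomology intervenes.

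For (a), if $\dim B<p$ then a fortiori $\dim\overline{B}\leqslant\dim B<p$, so $0\leqslant\log_p(\dim\overline{B})<1$, hence $\kappa=0$ and $p^{2\kappa}=1$. A $W(k)$-module killed by the unit $1$ is zero, so in every degree both the kernel and the cokernel of $t_{F\ast}$ vanish; thus $t_{F\ast}$ is an isomorphism already integrally, for every choice of Frobenius lift $F$. For (b), for a general $\overline{B}$ the element $p^{2\kappa}\in W(k)$ is nonzero, hence invertible in $K=W(k)[\frac{1}{p}]$. Since $K$ is a localization of $W(k)$, the functor $-\otimes_{W(k)}K$ is exact and commutes with taking cohomology of complexes of $W(k)$-modules; therefore the kernel and cokernel of $t_{F\ast}\otimes_{W(k)}K$ are localizations of $p^{2\kappa}$-torsion $W(k)$-modules, hence zero, so $t_{F\ast}\otimes_{W(k)}K$ is an isomorphism. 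It remains to match the two sides with the objects in the statement: $\Omega_{B^\dagger/W(k)}\otimes\QQ=\Omega_{B^\dagger/W(k)}\otimes_{W(k)}K$ (both amount to inverting $p$), whose cohomology is by definition $\h_{\text{MW}}^\ast(\Spec\overline{B}/K)$, while the right-hand complex becomes $W^\dagger\Omega_{\overline{B}/k}\otimes_{W(k)}K$; this yields the claimed rational isomorphism. (For standard \'etale affines it is moreover independent of $F$ in the derived category by \cite[Corollary 4.1.11]{Davis}, as recalled above, but the mere existence statement needs only a single lift $F$.)

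The main point is that, granting Proposition 3.24, there is essentially no obstacle: the corollary is pure bookkeeping with localization at $p$. All the difficulty is concentrated in that proposition --- the explicit bound $p^{2\kappa}$ on the torsion of the comparison map --- which here we take as given input. The only step worth a line of justification is the commutation of $-\otimes_{W(k)}K$ with cohomology, which is immediate from the exactness of localization.
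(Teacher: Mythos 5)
Your deduction is correct and is the expected one: Proposition 3.24 of Davis--Langer--Zink gives that the kernel and cokernel of $t_{F\ast}$ are killed by $p^{2\kappa}$ with $\kappa=\lfloor\log_p(\dim\overline{B})\rfloor$, so part (a) follows because $\kappa=0$ once $\dim\overline{B}<p$ (and $p^0=1$ kills nothing but $0$), while part (b) follows because inverting $p$ annihilates the $p$-power torsion, localization being exact and hence commuting with cohomology of complexes of $W(k)$-modules. The paper itself does not reproduce a proof --- it cites the corollary from \cite{DavisLangerZink}, and its one-line remark ``This is proved using a spectral sequence argument'' refers to the proof of Proposition 3.24 (localisation to standard \'etale affines and gluing via a \v{C}ech/hypercohomology spectral sequence), not to the elementary passage from the proposition to the corollary that you carry out; so there is no discrepancy, and your argument supplies precisely the bookkeeping the text elides.
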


We want to generalise this to a comparison between Monsky--Washnitzer cohomology and overconvergent cohomology with coefficients. For this let $(E_{B^\dagger},\nabla_{B^\dagger})$ be a free $B^\dagger$-module of rank $r$ with integrable connection as defined in (\ref{MWIntCon}). Let $\{e_1,\ldots, e_r\}$ be a set of generators. Then using the map $t_F: B^\dagger\rightarrow W^\dagger(\overline{B})$ consider the tensor product
$$E_F:=t^\ast_F(E_{B^\dagger})= E_{B^\dagger}\otimes_{t_F}W^\dagger(\overline{B}).$$
This is a free $W^\dagger(\overline{B})$-module of rank $r$ where by abuse of notation, we denote the induced set of generators also by $\{e_1,\ldots, e_r\}$.  The connection $\nabla_{B^\dagger}$ gives via the inverse image functor rise to a map 
$$\nabla_F:=t_F^\ast\nabla_{B^\dagger}: E_F\rightarrow E_F\otimes_{W^\dagger(\overline{B})}W^\dagger\Omega^{_{^\bullet}}_{\overline{B}}.$$
It fits into a commutative diagram
\begin{equation}\label{diagNabla}
\xymatrix{E_{B^\dagger}\ar[r] \ar[d]_{\nabla_{B^\dagger}} & E_F \ar[d]^{\nabla_F} \\
E_{B^\dagger}\otimes_{B^\dagger}\Omega^1_{B^\dagger} \ar[r] & E_F\otimes_{W^\dagger(\overline{B})}W^\dagger\Omega_{\overline{B}/k}^1}
\end{equation}
where the horizontal maps induced by $t_F$ take basis to basis. 
 
\begin{prop} 
The above constructed map $\nabla_F:E_F\rightarrow E_F\otimes W^\dagger\Omega_{\overline{B}/k}^1$ is an integrable connection.
\end{prop}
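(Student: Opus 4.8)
The plan is to verify the two claims — that $\nabla$ is a connection (i.e.\ satisfies the Leibniz rule) and that it is integrable ($\nabla^2=0$) — by reducing both to the corresponding properties of $\widetilde\nabla$ via the compatibility diagram (\ref{diagNabla}).

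First I would check the Leibniz rule. Fix a basis $e_1,\dots,e_r$ of $\widetilde E$ over $B^\dagger$; its image under $\widetilde E\to E$ is a basis of $E$ over $W^\dagger(\overline B)$, so it suffices to compute $\nabla$ on elements of the form $\omega\otimes e_i$ with $\omega\in W^\dagger(\overline B)$. By the very definition of $\nabla$ as the composite $(\widetilde\nabla\otimes\id+\id\otimes d)$ followed by $(\id\otimes t_F\oplus\id)$, one has $\nabla(e_i\otimes 1)=(t_F\otimes\id)(\widetilde\nabla(e_i))$ and $\nabla(e_i\otimes\omega)=(t_F\otimes\id)(\widetilde\nabla(e_i))\cdot\omega+e_i\otimes d\omega$; writing a general section as a $W^\dagger(\overline B)$-linear combination of the $e_i$ and using that $t_F$ intertwines $d$ on $\Omega_{B^\dagger/W(k)}$ with $d$ on $W^\dagger\Omega_{\overline B/k}$ (as recorded above), the Leibniz rule $\nabla(\omega e)=\omega\nabla(e)+e\otimes d\omega$ follows by $W^\dagger(\overline B)$-linearity of the first summand and the Leibniz rule for $d$ on $W^\dagger(\overline B)$. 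This is essentially the statement that the base-change of a connection along the ring map $t_F\colon B^\dagger\to W^\dagger(\overline B)$ is again a connection, once one knows $t_F$ is compatible with the differentials.

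Next I would treat integrability. Extend $\nabla$ to the de Rham--Witt complex with coefficients, $E\otimes W^\dagger\Omega_{\overline B/k}^{\bullet}$, in the usual way, and likewise extend $\widetilde\nabla$ to $\widetilde E\otimes\Omega_{B^\dagger/W(k)}^{\bullet}$. Since $t_F$ is a morphism of differential graded algebras $\Omega_{B^\dagger/W(k)}^{\bullet}\to W^\dagger\Omega_{\overline B/k}^{\bullet}$ (compatibility with $d$ plus the multiplicative structure), diagram (\ref{diagNabla}) extends to a morphism of the associated total complexes, i.e.\ the square commutes in every degree with the horizontal maps $\widetilde E\otimes\Omega^{q}\to E\otimes W^\dagger\Omega^{q}$ induced by $t_F$ on basis elements. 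On basis elements $e_i\otimes 1$ we have $\nabla^2(e_i\otimes 1)=(t_F\otimes\id)\bigl(\widetilde\nabla^2(e_i)\bigr)=0$ because $\widetilde\nabla$ is integrable by hypothesis; extending to arbitrary sections $\omega e_i$ via the Leibniz rule just established and using $d^2=0$ on $W^\dagger\Omega_{\overline B/k}^{\bullet}$, one gets $\nabla^2=0$ everywhere.

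The only genuine point requiring care — and the step I expect to be the main obstacle — is justifying that $t_F$ is compatible not merely with $d$ in degree zero but as a morphism of complexes (indeed dg-algebras), so that the curvature computation can legitimately be transported degree by degree. This is where one invokes that $t_F\colon\Omega_{B^\dagger/W(k)}\to W^\dagger\Omega_{\overline B/k}$ is obtained from the ring homomorphism $t_F\colon B^\dagger\to W^\dagger(\overline B)$ by the universal property of Kähler differentials and functoriality (as recalled just before the proposition), from which commutation with $d$ and multiplicativity are formal; the overconvergence is preserved because, as noted, the de Rham--Witt differential carries overconvergent elements to overconvergent elements. Once that is in place, everything else is the standard bookkeeping for base change of an integrable connection along a map of dg-algebras, and no convergence estimates beyond those already built into the definition of $W^\dagger$ are needed.
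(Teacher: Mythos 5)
Your proposal is correct and takes essentially the same route as the paper: Leibniz is deduced from the construction of $\nabla$ and the commutativity of diagram~(\ref{diagNabla}), and integrability is reduced to $\widetilde\nabla^2=0$ and $d^2=0$ via the same expansion of $\nabla^2(e\otimes w)$ using the extended commutative diagram. The only cosmetic difference is that you first compute on basis elements of the free module $\widetilde E$ and then extend by the Leibniz rule, whereas the paper computes directly on a general pure tensor $e\otimes w$; the underlying cancellation $-\widetilde\nabla e\otimes dw+\widetilde\nabla e\otimes dw$ with the correct sign convention is the same.
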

\begin{proof} 
As the original connection $\nabla_{B^\dagger}$ satisfies the Leibniz rule, it is clear by the definition and the commutativity of diagram (\ref{diagNabla}), that $\nabla_F$ satisfies the Leibniz rule as well. Indeed, it is easy to check this for an element $e_i\otimes w$ with $e_i$ a generator of $E_F$, and the general case follows right away. 

Since $t_F$ is compatible with the differentials on either side, we might extend the diagram to higher differentials and it still stays commutative. Again integrability follows from the fact that, $\nabla_{B^\dagger}$ and $d$ are integrable connections themselves and therefore for $e_i\in E_F$ a generator $\nabla^2_F(e_i)=0$. With $w\in W^\dagger(\overline{B})$ we calculate 
\begin{eqnarray*}\nabla_F^2\left(e_i\otimes w\right) &=& \nabla_F\left(\nabla_F (e_i)\otimes w+ e_i\otimes dw\right)\\
&=& \nabla_F^2( e_i)\otimes w-\nabla_F e_i\otimes dw+\nabla_F e_i\otimes  dw + e_i\otimes d^2w = 0\end{eqnarray*}
with the correct sign convention and the general case follows by linearity.
\end{proof}

\begin{lem} 
With the notation as before, if $\nabla_{B^\dagger}$ is quasi-nilpotent, so is $\nabla_F$.
\end{lem}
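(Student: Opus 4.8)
The plan is to reduce the statement to the unramified local picture where quasi-nilpotence can be checked on the level of the connection matrix with respect to a coordinate system. Recall that $\overline{B}$ is smooth over $k$, so \'etale-locally it admits coordinates $\overline{x}_1,\dots,\overline{x}_d$, and these lift to $x_1,\dots,x_d \in B^\dagger$ giving an \'etale map from a polynomial ring; the corresponding derivations $\partial/\partial x_i$ on $B^\dagger$ are transported via $t_F$ to derivations of $W^\dagger(\overline{B})$, because $t_F$ is compatible with the K\"ahler differentials on both sides and hence with the dual tangent vectors. Since $\widetilde{E}$ is free, fix a basis $e_1,\dots,e_r$; then $\widetilde{\nabla}$ is encoded by matrices $N_i$ with $\widetilde{\nabla}(\partial/\partial x_i)(e_j) = \sum_k (N_i)_{kj} e_k$, and by the commutativity of diagram~(\ref{diagNabla}) together with the fact that $t_F$ takes the chosen basis of $\widetilde{E}$ to a basis of $E$, the connection $\nabla$ on $E$ is encoded by the \emph{same} matrices, now with entries $t_F((N_i)_{kj}) \in W^\dagger(\overline{B})$, relative to the transported coordinate system.

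First I would recall the precise meaning of quasi-nilpotence from \cite[Def. 4.10 and 4.14]{BerthelotOgus} in the Witt-vector context: a connection is quasi-nilpotent if, locally, iterating the operators $\nabla(\partial/\partial x_i)$ (suitably composed with divided-power corrections as in the reference) annihilates every section after finitely many steps — equivalently, the operators act locally nilpotently in a divided-power sense on a set of generators. For a free module this is equivalent to a condition on the matrices $N_i$: a certain "divided-power iterate" $N_i^{[m]}$ (built from $N_i$ and the action of $\partial/\partial x_i$ on the entries) vanishes for $m \gg 0$ locally. The key observation is that this condition is a purely formal/polynomial condition in the entries of the $N_i$ and their images under the coordinate derivations, and that $t_F \colon B^\dagger \to W^\dagger(\overline{B})$ is a ring homomorphism compatible with the relevant derivations.

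The main step is therefore: apply $t_F$ to the identity witnessing quasi-nilpotence of $\widetilde{\nabla}$. Since $t_F$ is a homomorphism of $W(k)$-algebras and commutes with the derivations $\partial/\partial x_i$ (by functoriality of differentials, which is exactly what makes diagram~(\ref{diagNabla}) commute), it sends the vanishing divided-power iterate $\widetilde{N}_i^{[m]} = 0$ to the vanishing of the corresponding iterate $N_i^{[m]} = t_F(\widetilde{N}_i^{[m]}) = 0$ for the transported data. Hence $\nabla$ is quasi-nilpotent with respect to the transported coordinate system; and since quasi-nilpotence is independent of the choice of coordinate system (as noted in \cite{BerthelotOgus} and invoked in the preceding remark), this suffices. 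I would also remark that, invoking the earlier Lemma, $(E,\nabla)$ sits inside Bloch's category, so the notion of quasi-nilpotence we are checking is the one intrinsic to that category.

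The part requiring the most care will be making precise that the divided-power iteration operators defining quasi-nilpotence really are built from ring operations and the coordinate derivations in a way that commutes with $t_F$ — i.e.\ checking that $t_F$ intertwines the full iterated operators, not just the first-order matrices $N_i$. This amounts to verifying that $t_F$, being compatible with $d$ on both complexes (already used to prove integrability of $\nabla$ in the previous proposition), is compatible with each $\partial/\partial x_i$ acting on coefficients, and that the combinatorial formula for $N_i^{[m]}$ in terms of $N_i$, its derivatives, and products is preserved. Once that compatibility is in hand, the conclusion is immediate. I do not expect genuine difficulty beyond bookkeeping, since everything is local, the modules are free, and $t_F$ is a well-behaved homomorphism; the only subtlety is citing the correct form of the quasi-nilpotence criterion for Witt-vector connections so that the reduction to matrices is legitimate.
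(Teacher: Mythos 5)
Your argument is correct, and it is genuinely more explicit than the paper's, which disposes of the lemma in three sentences. The paper's proof uses a different decomposition: it observes that quasi-nilpotence is local and coordinate-independent, notes that the de~Rham--Witt differential $d$ on $W^\dagger(\overline{B})$ is itself quasi-nilpotent, and then concludes that since $\nabla$ is built out of $\widetilde{\nabla}\otimes\id$ and $\id\otimes d$ (both quasi-nilpotent), so is $\nabla$. Your proof instead describes everything in terms of connection matrices in transported coordinates and transports the vanishing identity through the ring homomorphism $t_F$; the essential new input you supply --- and the paper leaves tacit --- is that $t_F$ intertwines the derivations $\partial/\partial x_i$ with their counterparts on $W^\dagger(\overline{B})$, which follows from the commutativity of diagram~(\ref{diagNabla}). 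What the paper's route buys is brevity and avoiding any matrix bookkeeping; what your route buys is a concrete mechanism ("apply $t_F$ to the witnessing identity") in place of the somewhat elliptical "thus" at the end of the paper's proof.

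One point you should make explicit to close the argument completely. You reduce quasi-nilpotence of $\nabla$ on $E=\widetilde E\otimes_{t_F}W^\dagger(\overline{B})$ to the vanishing of the matrix iterates $N_i^{[m]}$ on the transported basis $\{e_j\otimes 1\}$. But the Berthelot--Ogus condition is phrased for \emph{all} local sections $s$, and a general $s=\sum a_j(e_j\otimes 1)$ with $a_j\in W^\dagger(\overline{B})$ brings in divided-power Leibniz terms $\partial_i^{[k]}(a_j)\cdot\nabla(\partial_i)^{[\ell]}(e_j\otimes 1)$ with $k+\ell=m$. For those to die for $m\gg 0$ you need, in addition to $\nabla(\partial_i)^{[\ell]}(e_j\otimes 1)=0$ for $\ell\gg 0$, that $\partial_i^{[k]}(a_j)$ is eventually zero mod each $p^n$, i.e.\ that $d$ on $W^\dagger\Omega_{\overline{B}/k}$ (equivalently on each $W_n\Omega_{\overline{B}/k}$) is itself quasi-nilpotent. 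This is exactly the auxiliary fact the paper invokes; once you record it, your reduction to the matrix condition on a basis of the free module is legitimate, and the rest of your argument --- applying $t_F$, which commutes with $\partial/\partial x_i$, to the vanishing identity for $\widetilde{N}_i^{[m]}$ --- goes through as you describe.
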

\begin{proof} 
Quasi-nilpotence is a local property which can be checked in local coordinates since by \cite[Remark 4.11]{BerthelotOgus} it is independent of the choice of coordinate system. For both, $\nabla_{B^\dagger}$ and $\nabla_F$ it is defined modulo $p$. But modulo $p$ these two connections coincide and the statement is clear.
\end{proof}

For different choices of Frobenius lifts on $B^\dagger$, we obtain different $W^\dagger(\overline{B})$-modules $E_F$, which are  isomorphic as free modules since $t_F$ is injective. Additionally, the comparison map
$$t_F:E_{B^\dagger}\otimes_{B^\dagger}\Omega^{_{^\bullet}}_{B^\dagger/W(k)}\rightarrow E_F\otimes_{W^\dagger(\overline{B})}W^\dagger\Omega^{_{^\bullet}}_{\overline{B}/k}$$
still depends on the choice of Frobenius lift $F$ and it is not clear a priori that two different choices of Frobenius induce the same map on cohomology. 

Davis, Langer and Zink show however that for $\overline{B}$ standard \'etale affine (for the definition see \cite[Def. 4.1.3]{Davis}) different choices of $F$ induce chain homotopic maps on complexes $t_F: \Omega^{_{^\bullet}}_{B^\dagger/W(k)}\rightarrow W^\dagger\Omega^{_{^\bullet}}_{\overline{B}/k}$ \cite[Prop. 3.27]{DavisLangerZink}. They use this to deduce from the standard \'etale affine case, that the map $t_F$  is a rational quasi-isomorphism for general smooth $k$-algebras $\overline{B}$.

We will extend their results to overconvergent connections, starting as they did with the case of $\overline{B}$ being standard \'etale affine. 

\begin{prop}\label{Prop224}
Let $\overline{B}$ be standard \'etale affine and $(E_{B^\dagger},\nabla_{B^\dagger})$ a free integrable connection as introduced above. Then  the comparison map $t_F:E_{B^\dagger}\otimes_{B^\dagger}\Omega^{_{^\bullet}}_{B^\dagger/W(k)}\rightarrow E_F\otimes_{W^\dagger(\overline{B})}W^\dagger\Omega^{_{^\bullet}}_{\overline{B}/k}$ is a quasi-isomorphism.
\end{prop}
\begin{proof}
In the present case of a standard \'etale affine algebra the complex $W^\dagger\Omega^{_{^\bullet}}_{\overline{B}/k}$ has a decomposition in terms of integral and rational weights of the basic Witt differentials
$$W^\dagger\Omega^{_{^\bullet}}_{\overline{B}/k} = W^{\dagger,\text{int}}\Omega^{_{^\bullet}}_{\overline{B}/k}\oplus W^{\dagger,\text{frac}}\Omega^{_{^\bullet}}_{\overline{B}/k}$$
which is compatible with the differential graded structure. Davis, Langer and Zink show in the proof of \cite[Theo. 3.19]{DavisLangerZink} that on the one hand $t_F$ maps in fact into the integral part of the overconvergent  complex and moreover $t_F:\Omega^{_{^\bullet}}_{B^\dagger/W(k)}\rightarrow W^{\dagger,\text{int}}\Omega^{_{^\bullet}}_{\overline{B}/k}$ is an isomorphism, and on the other hand, that $W^{\dagger,\text{frac}}\Omega^{_{^\bullet}}_{\overline{B}/k}$ is acyclic. 

We wish to apply the same strategy but have to be a bit careful tensoring with $E_{B^\dagger}$. The map of complexes $t_F$ can be represented by a commutative diagram of which we provide the first square
$$\xymatrix{  E_{B^\dagger}    \ar[rr]^{t_F\qquad\qquad\qquad} \ar[dd]_{\nabla_{B^\dagger}}   &    &         E_F\otimes W^{\dagger,\text{int}}(\overline{B})\oplus E_F\otimes W^{\dagger,\text{frac}}(\overline{B})       \ar[dd]^{\nabla_F\big|_{\text{int}} +\nabla_F\big|_{\text{frac}} }\\
&&\\
E_{B^\dagger}\otimes\Omega_{B^\dagger/W(k)}^1       \ar[d]_{\nabla_{B^\dagger}} \ar[rr]^{t_F\qquad\qquad\qquad}    & &    E_F\otimes W^{\dagger,\text{int}}\Omega^1_{\overline{B}/k} \oplus E_F \otimes W^{\dagger,\text{frac}}\Omega^1_{\overline{B}/k} \ar[d]^{\nabla_F\big|_{\text{int}} +\nabla_F\big|_{\text{frac}} }\\
&&}$$
where the notation $\big|_{\text{int}}$ and $\big|_{\text{frac}}$ means restriction of the coefficients to the integral and fractional part respectively. Recall that the image of $t_F$ in $W^{\dagger,\text{frac}}\Omega^{_{^\bullet}}_{\overline{B}/k}$, which is acyclic, is trivial. Thus the tensor product $E_F\otimes W^{\dagger,\text{frac}}\Omega^{_{^\bullet}}_{\overline{B}/k}= E_{B^\dagger}\otimes_{t_F} W^{\dagger,\text{frac}}\Omega^{_{^\bullet}}_{\overline{B}/k}$ is trivial. 

From the rank one case it follows then that $t_F:E_{B^\dagger}\otimes\Omega^{_{^\bullet}}_{B^\dagger/W(k)}\rightarrow E_F\otimes W^{\dagger,\text{int}}\Omega^{_{^\bullet}}_{\overline{B}}$ is an isomorphism and the claim is shown.\end{proof}
 
It remains to generalise this result to an arbitrary smooth algebra $\overline{B}$. Consider an overconvergent Witt lift 
$$t_F:B^\dagger\rightarrow W^\dagger(\overline{B})$$
uniquely determined by a Frobenius lift $F$ to $B^\dagger$. Let $t_F$ as before also denote  the induced map of complexes which for a free $B^\dagger$-module with integrable connection $(E_{B^\dagger},\nabla_{B^\dagger})$ gives a map 
$$t_F:E_{B^\dagger}\otimes\Omega^{_{^\bullet}}_{B^\dagger}\rightarrow E_F\otimes  W^\dagger\Omega^{_{^\bullet}}_{\overline{B}}.$$
Passing to cohomology, we generalise the result of \cite[Cor. 3.25]{DavisLangerZink}.

\begin{prop}\label{Prop225} 
The morphism $t_F$ induces a rational isomorphism 
$$H_{\text{MW}}{^{_{^\bullet}}}(A/K,E_{B^\dagger})\cong\HH{^{_{^\bullet}}}(E_F\otimes W^\dagger\Omega^{_{^\bullet}}_{\overline{B}/k,\QQ})$$
between Monsky--Washnitzer cohomology and overconvergent cohomology with coefficients.
\end{prop}

\begin{proof} 
Following \cite{DavisLangerZink}, we note first that the map $t_F:E_{B^\dagger}\otimes\Omega^{_{^\bullet}}_{B^\dagger}\rightarrow E_F\otimes W^\dagger\Omega^{_{^\bullet}}_{\overline{B}}$ induces a map of complexes of Zariski sheaves on $\Spec\overline{B}$:
$$\widetilde{t}_F:\widetilde{E}_{B^\dagger}\otimes\widetilde{\Omega}^{_{^\bullet}}_{B^\dagger/W(k)}\rightarrow \widetilde{E}_F\otimes W^\dagger\Omega^{_{^\bullet}}_{\Spec\overline{B}/k}.$$
Recall from \cite[Proposition 1.2]{DavisLangerZink} and \cite[Lemma 7]{Meredith} that for Zariski topology 
$$H^i(\Spec\overline{B},\widetilde{\Omega}^d_{B^\dagger/W(k)})=0=H^i(\Spec\overline{B},W^\dagger\Omega^d_{\Spec\overline{B}/k}).$$ 
Since $E_{B^\dagger}$ is a free $B^\dagger$-module and $E_F$ is a free $W^\dagger(\overline{B})$-module, the same equalities hold true with coefficients. Thus we have
\begin{eqnarray*} R\Gamma(\Spec\overline{B},\widetilde{E}_{B^\dagger}\otimes\widetilde{\Omega}^{_{^\bullet}}_{B^\dagger/W(k)}) &=& E_{B^\dagger}\otimes\Omega^{_{^\bullet}}_{B^\dagger/W(k)}\\
R\Gamma(\Spec\overline{B},\widetilde{E}_F\otimes W^\dagger\Omega^{_{^\bullet}}_{\Spec \overline{B}/k}) &=& E_F\otimes W^\dagger\Omega^{_{^\bullet}}_{\overline{B}/k}\end{eqnarray*}
which means, as point out Davis, Langer and Zink, that we can reconstruct $t_F$ from $\widetilde{t}_F$. This argument holds verbatim for the induced maps $t_F$ and $\widetilde{t}_F$ on the rational complexes (where $p$ has been inverted). 

Let $\{U_i=\Spec\overline{B}_i\}_{i\in I}$ be a finite cover of $\Spec\overline{B}$ made out of standard \'etale affines, such that their intersections are again standard \'etale affine, which always exists by a result of Kedlaya in \cite{Kedlaya3}. Let $t_{F,i}: \Omega^{_{^\bullet}}_{B^\dagger_i/W(k)}\rightarrow W^\dagger\Omega^{_{^\bullet}}_{\overline{B}_i/k}$ be the localisation of $t_F$ to $\overline{B}_i$ which induces the localisation 
$$t_{F,i}:E_{B^\dagger}\otimes\Omega^{_{^\bullet}}_{B^\dagger_i/W(k)}\rightarrow E_F\otimes W^\dagger\Omega^{_{^\bullet}}_{\overline{B}_i/k}.$$

Davis, Langer and Zink show, that the map $p^\kappa t_{F,i}: \Omega^{_{^\bullet}}_{B^\dagger_i/W(k)}\rightarrow W^\dagger\Omega^{_{^\bullet}}_{\overline{B}_i/k}$ on the standard \'etale affine $\overline{B}_i$ is homotopic to a map $p^\kappa t_{F'}$ where $t_{F'}$ is determined uniquely on $\overline{B}_i$ via a lifting $F'$ of Frobenius to $\widetilde{B}_i$, and is a quasi-isomorphism. Therefore the rational map
$$t_{F,i}:\Omega^{_{^\bullet}}_{B^\dagger_i/W(k),\QQ}\rightarrow W^\dagger\Omega^{_{^\bullet}}_{\overline{B}_i/k,\QQ}$$
is a quasi-isomorphism. As $W^{\dagger,\text{frac}}\Omega^{_{^\bullet}}_{\overline{B}_i/k,\QQ}$ is acyclic, and the projection 
$$W^\dagger\Omega^{_{^\bullet}}_{\overline{B}_i/k,\QQ}=W^{\dagger,\text{int}}\Omega^{_{^\bullet}}_{\overline{B}_i/k,\QQ}\oplus W^{\dagger,\text{frac}}\Omega^{_{^\bullet}}_{\overline{B}_i/k,\QQ}\rightarrow W^{\dagger,\text{int}}\Omega^{_{^\bullet}}_{\overline{B}_i/k,\QQ}$$
is a quasi-isomorphism, $t_{F,i}$ factors through $W^{\dagger,\text{int}}\Omega^{_{^\bullet}}_{\overline{B}_i/k,\QQ}$ in the derived category. 

Consequently, if we tensor these rational complexes with the free module $E_{B^\dagger}$ we obtain a diagram similar to the one above
$$\xymatrix{  E_{B_i^\dagger,\QQ}    \ar[rr]^{t_{F,i}\qquad\qquad\qquad} \ar[dd]_{\nabla_{B_i^\dagger}}   &    &         E_F\otimes W^{\dagger,\text{int}}(\overline{B}_i)_{\QQ}\oplus E_F\otimes W^{\dagger,\text{frac}}(\overline{B}_i)_{\QQ}       \ar[dd]^{\nabla_F\big|_{\text{int}} +\nabla_F\big|_{\text{frac}} }\\
&&\\
E_{B_i^\dagger}\otimes\Omega_{B_i^\dagger/W(k),\QQ}^1       \ar[d]_{\nabla_{B_i^\dagger}} \ar[rr]^{t_F\qquad\qquad\qquad}    & &    E_F\otimes W^{\dagger,\text{int}}\Omega^1_{\overline{B}_i/k,\QQ} \oplus E_F \otimes W^{\dagger,\text{frac}}\Omega^1_{\overline{B}_i/k,\QQ} \ar[d]^{\nabla_F\big|_{\text{int}} +\nabla_F\big|_{\text{frac}} }\\
&&}$$
where the complex $E_F\otimes W^{\dagger,\text{frac}}\Omega^{_{^\bullet}}_{\overline{B}_i/k,\QQ}$ is acyclic. Thus again the map
$$t_{F,i}:E_{B_i^\dagger}\otimes\Omega^{_{^\bullet}}_{B^\dagger_i/W(k),\QQ}\rightarrow E_F\otimes W^\dagger\Omega^{_{^\bullet}}_{\overline{B}_i/k,\QQ}$$
is a quasi-isomorphism, and by what we said above, we conclude that 
$$t_{F}:E_{B^\dagger}\otimes\Omega^{_{^\bullet}}_{B^\dagger/W(k),\QQ}\rightarrow E_F\otimes W^\dagger\Omega^{_{^\bullet}}_{\overline{B}/k,\QQ}$$
is a quasi-isomorphism.\end{proof}

The proposition also shows that $\HH^{_{^\bullet}}(E_F\otimes W^\dagger\Omega_{X/k}^{_{^\bullet}}$ is independent of the choice of a Frobenius lift $F$.


\section{Overconvergent de Rham--Witt and rigid cohomology with coefficients}\label{Sec3}

The goal of this section is to globalise the result of the previous one to a quasiprojective smooth variety $X$ over $k$. More precisely, we want to compare rigid cohomology with a category of coefficients to overconvergent de Rham--Witt connections as introduced Section 1. An appropriate category of coefficients for our purposes is the category of overconvergent isocrystals. We will build on Davis, Langer and Zink's comparison map in \cite{DavisLangerZink} 
\begin{equation}\label{DLZCompRig} R\Gamma_{\text{rig}}(X)\rightarrow R\Gamma(X,W^\dagger\Omega_{X/k,\QQ}),\end{equation}
where the index $\QQ$ denotes that $p$ was inverted. 

\subsection{Overconvergent isocrystals}\label{Subsec3.1}

Let $X\subset \overline{X}$ be a compactification which always exists according to Nagata, let $]X[$ and $]\overline{X}[$ be the tube of $X$ and $\overline{X}$, respectively, as defined by Berthelot  \cite[Sec. 1]{Berthelot}, and $j^\dagger\OO_{]\overline{X}[}$ the sheaf of overconvergent analytic functions on $X$, as well as for a strict neighbourhood $V$ of $]X[$ in $]\overline{X}[$, let $j^\dagger\OO_V$ be the sheaf of overconvergent analytic functions on $V$  (cf. \cite[Sec. 1]{Berthelot2}).

\begin{defi}
An overconvergent isocrystal on $(X\subset\overline{X})$ is a coherent $j^\dagger\OO_{]\overline{X}[}$-module with an integrable connection  whose Taylor series converges on a strict neighbourhood of the diagonal. They form a category denoted by $\Isoc^\dagger(X/W(k))$. \end{defi}

According to \cite[Theo. 2.3.5]{Berthelot} this is independent of the choice of the compactification $\overline{X}$ and therefore the notation $\Isoc^\dagger(X/W(k))$ is justified. Note also that an isocrystal in our setting is automatically locally free of finite rank \cite[Prop. 2.2.3]{Berthelot}.

Rigid cohomology with coefficients in an isocrystal $\edg\in \Isoc^{\dagger}(X/W(k))$ can be computed on a strict neighbourhood $V$ of $]X[$ in $]\overline{X}[$
$$R\Gamma_{\text{rig}}(X/K,\edg):=R\Gamma(]\overline{X}[,\edg\otimes j^\dagger \Omega^{_{^\bullet}}_{]\overline{X}]}= R\Gamma(V,j^\dagger(\edg_V\otimes \Omega^{_{^\bullet}}_V)).$$

We can also calculate rigid cohomology using the dagger spaces of Gro\ss{}e-Kl\"onne \cite{GrosseKloenne}. The idea is to associate to a smooth scheme a rigid analytic space endowed with an overconvergent structure sheaf. With the notation from above, the tube $]\overline{X}[$ seen as a  partially proper rigid space is equivalent to a partially proper dagger space $]\overline{X}[^\dagger$ (see \cite[Thm. 2.27]{GrosseKloenne}). Let $]X[^\dagger$ be the open subspace of $]\overline{X}[^\dagger$ whose underlying set can be identified with $] X[$. Then Grosse-Kl\"onne proves in \cite[Thm. 5.1]{GrosseKloenne} that for a coherent $\OO_{]\overline{X}[^\dagger}$-module $\mathscr{F}$ and the associated coherent $\OO_{]\overline{X}[}$-modules $\F'$ there is a canonical isomorphism
$$H^i(]X[^\dagger,\mathscr{F})\cong H^i\left(]\overline{X}[,j^\dagger\mathscr{F}'\right).$$
In particular this is true for the de Rham complex, thus giving a comparison to rigid cohomology $H_{\text{dR}}^i(]X[^\dagger)=H^i_{\text{rig}}(X/K)$.

\begin{lem}  An overconvergent isocrystal $\edg\in\Isoc^\dagger(X/W(k))$ is equivalent to a coherent $\OO_{]\overline{X}[}$-module with integrable connection.\end{lem}
\begin{proof} 
By definition, the overconvergent isocrystal $\edg$ has a realisation in the sense that there exists a good open neighbourhood $V$ of $]X[$ in $]\overline{X}[$ and a coherent module with integrable connection $\edg_V$ on $V$ such that $\edg=j^\dagger\edg_V$ (cf. \cite[Prop. 2.2.3]{Berthelot}). Since $\edg$ is locally free, it follows that there is a coherent $\OO_{]\overline{X}[}$-module $\edg_{]\overline{X}[}$ with integrable connection such that $\edg=j^\dagger\edg_{]\overline{X}[}$ .

According to \cite[Thm. 2.26]{GrosseKloenne} the functor from dagger spaces to rigid spaces defined by Gro\ss{}e-Kl\"onne in \cite[Thm. 2.19]{GrosseKloenne} induces an equivalence between coherent $\OO_{]\overline{X}[}$-modules and coherent $\OO_{]\overline{X}[^\dagger}$-modules.  The claim follows. \end{proof}

\begin{cor}\label{Cor313} Let $\edg\in\Isoc^\dagger(X/W(k))$. Then there is a canonical isomorphism 
$$H^i(]X[^\dagger,\edg_{]X[^\dagger}\otimes\Omega^{_{^\bullet}}_{]X[^\dagger})\cong H^i_{\text{rig}}(X/K,\edg)$$
of cohomology groups with coefficients.\end{cor}

\subsection{Local results}\label{Subsec3.2}

Here we assume that $X$ is smooth and affine and consider an  isocrystal $\edg\in\Isoc^\dagger(X/W(k))$ and let $\nabla$ be the connection that comes with it. We follow closely the proceeding of \cite[Section 4]{DavisLangerZink}. 

Let $X=\Spec\overline{A}$ be smooth and affine over $k$, $F=\Spec B$ a Witt lift together with a morphism $\varkappa:B\rightarrow W^\dagger(\overline{A})$ which lifts $B\rightarrow \overline{A}$ such that $(X,F,\varkappa)$ is an overconvergent Witt frame as defined in \cite[Def. 4.1]{DavisLangerZink}. Denote by $\widehat{F}$ the $p$-adic completion of $F$ and $]X[_{\widehat{F}}$ the tubular neighbourhood of $X$ in the rigid analytic space $\widehat{F}_K$. Furthermore let $V\subset F^{\text{an}}_K$ be a strict neighbourhood of $]X[_{\widehat{F}}$. The rigid cohomology of $X$ is then $R\Gamma_{\text{rig}}(X)=R\Gamma(V,j^\dagger\Omega_V)$ which is independent of the choice of $V$. 

Davis, Langer and Zink construct a map 
\begin{equation}\label{DLZCompRigLoc}\psi_F:\Gamma(V,j^\dagger\Omega_V)\rightarrow W^\dagger\Omega_{\overline{A}/k,\QQ}\end{equation}
for each overconvergent Witt frame $(X,F,\varkappa)$ and each strict neighbourhood $V$ and show that it factors canonically through $R\Gamma(V,j^\dagger\Omega_V)$. More precisely, they construct a morphism in degree zero
$$\psi_F:\Gamma(V,j^\dagger\OO_V)\rightarrow W^\dagger(\overline{A})_{\QQ}$$
and use the universal property of K\"ahler differentials. This enables us to copy the method we used in Section 2, by reason that it makes now sense to consider the object
$$\edg_F:=\edg\otimes_{\psi_F} W^\dagger(\overline{A})_{\QQ}$$
as a locally free $W^\dagger(\overline{A})_{\QQ}$-module and endow it with the connection
$$\nabla_F:=\psi_F^\ast\nabla:\edg_F\rightarrow \edg_F\otimes_{W^\dagger(\overline{A})_{\QQ}}.$$
Then the diagram
\begin{equation}\label{DiagEdg}\xymatrix{\edg\ar[r]\ar[d]_{\nabla}&\edg_F\ar[d]^{\nabla_F}\\
\edg\otimes\Omega^1_V\ar[r]&\edg_F\otimes(W^\dagger\Omega^1_{\overline{A}/k,\QQ})}\end{equation}
where the horizontal maps are induced by $\psi_F$ is commutative.

\begin{lem} The pair $(\edg_F,\nabla_F)$ is a locally free $W^\dagger(\overline{A})_{\QQ}$-module with integrable connection.\end{lem}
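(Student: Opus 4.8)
The plan is to mirror, essentially verbatim, the argument used in Section~\ref{Sec2} for the Monsky--Washnitzer side, since the roles played there by $\widetilde{E}$, $B^\dagger$, $\Omega_{B^\dagger/W(k)}$ and $t_F$ are now played by $\edg$, $\Gamma(V,j^\dagger\OO_V)$, $\Omega_V$ and $\psi$. First I would check that $\nabla'$ satisfies the Leibniz rule: this is immediate from the commutativity of diagram~(\ref{DiagEdg}) together with the Leibniz rule for $\nabla$ and for the de Rham--Witt differential $d$, exactly as in the proof of the proposition preceding Corollary~\ref{CorChainHomo}. Next, integrability $\nabla'^2=0$ follows from a formal computation: for a local section $e\otimes w$ with $e$ a section of $\edg$ and $w\in W^\dagger(\overline{A})\otimes\QQ$, one expands
\[
\nabla'^2(e\otimes w)=\nabla'\!\left(\nabla e\otimes w + e\otimes dw\right)=\nabla^2 e\otimes w-\nabla e\otimes dw+\nabla e\otimes dw+e\otimes d^2 w,
\]
which vanishes because $\nabla$ is integrable (it is the connection of the overconvergent isocrystal $\edg$) and $d^2=0$ on $W^\dagger\Omega_{\overline{A}/k}$. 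The cross terms cancel with the correct sign convention, just as in the earlier proposition.

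The one point requiring a little extra care — and the step I expect to be the main obstacle — is local freeness of $\edg'$ as a $W^\dagger(\overline{A})\otimes\QQ$-module. Here one cannot argue as crudely as on the Monsky--Washnitzer side, where $\widetilde{E}$ was assumed \emph{free}; an overconvergent isocrystal is only locally free over $j^\dagger\OO_V$. The plan is to reduce to the affine/Zariski-local situation: since $X=\Spec\overline{A}$ is affine, $\edg$ is locally free of finite rank over $\Gamma(V,j^\dagger\OO_V)$, so after replacing $\overline{A}$ by a suitable standard \'etale affine open (using Kedlaya's covering result invoked in the proof of Proposition~\ref{PropKerCoker}) we may assume $\edg$ is free, hence $\edg'=\edg\otimes(W^\dagger(\overline{A})\otimes\QQ)$ is free over $W^\dagger(\overline{A})\otimes\QQ$ by base change along the ring map $\psi:\Gamma(V,j^\dagger\OO_V)\to W^\dagger(\overline{A})$. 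Local freeness is a Zariski-local property, so this suffices; and the connection $\nabla'$ constructed above is the globalisation of these local connections, which glue because $\psi$ and $d$ are functorial.

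Finally I would remark that, as in Section~\ref{Sec2}, the construction of $\nabla'$ a priori depends on the choice of overconvergent Witt frame (equivalently, of the degree-zero map $\psi$), but different choices yield canonically isomorphic data; this compatibility is exactly what the homotopy arguments of Proposition~\ref{PropHomotopic} and Corollaries~\ref{CorChainHomo}, \ref{CorFunctorial} are designed to handle, and the same reasoning applies here since $\edg'$ is locally free. Thus the pair $(\edg',\nabla')$ is well-defined and is a locally free $W^\dagger(\overline{A})\otimes\QQ$-module with integrable connection, which is the assertion of the lemma. The only genuinely new ingredient compared to Section~\ref{Sec2} is the passage from ``free'' to ``locally free'', and that is handled by working Zariski-locally on $\Spec\overline{A}$.
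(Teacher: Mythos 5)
Your proof is correct and follows essentially the same route as the paper: the paper likewise deduces the Leibniz rule from the commutativity of diagram~(\ref{DiagEdg}) and proves integrability by the identical expansion $\nabla'^2(e\otimes w)=\nabla^2 e\otimes w-\nabla e\otimes dw+\nabla e\otimes dw+e\otimes d^2w=0$. The only divergence is that the paper treats local freeness as immediate and devotes the proof solely to the Leibniz rule and integrability; indeed your Kedlaya-covering reduction is unnecessary, since the scalar extension of a finitely generated projective $\Gamma(V,j^\dagger\OO_V)$-module along the ring map $\psi$ is automatically a finitely generated projective (hence locally free) $W^\dagger(\overline{A})\otimes\QQ$-module, with no Zariski-local argument required.
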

\begin{proof} It is clear that $\edg_F$ is locally free, so it remains only to show the Leibniz rule and integrability for the connection $\nabla_F$. The argument is almost identical to the previous section. Since by hypothesis the original connection $\nabla$ satisfies the Leibniz rule, it follows from the construction of $\nabla_F$ and the commutative diagram (\ref{DiagEdg}) that this is also the case for $\nabla_F$. Again, this can easily be checked locally on generators, and the general case follows. 

Because the morphism $\psi_F$ is compatible with the differentials on each side of the diagram (\ref{DiagEdg}), it is possible to extend it to the whole complex. As in the previous section we make use of the fact that $\nabla$ and $d$ are integrable. For a generator $e_i\in\edg$ and $w\in W^\dagger(A)_{\QQ}$ we calculate
\begin{eqnarray*}\nabla_F^2\left(e_i\otimes w\right) &=& \nabla_F\left(\nabla_F (e_i)\otimes w+ e_i \otimes dw\right)\\
&=& \nabla^2_F (e_i)\otimes w-\nabla_F e_i\otimes dw + \nabla_F e_i\otimes dw + e\otimes d^2w = 0.\end{eqnarray*}
Note the sign convention displayed in the last line. The general case follows by linearity. This shows integrability. \end{proof}

This result enables us to extend the comparison morphism (\ref{DLZCompRigLoc}) to coefficients
\begin{equation}\label{CompRigCoeff}\psi_F: \Gamma\left(V,j^\dagger(\edg_V\otimes\Omega_V)\right)\rightarrow \edg_F\otimes W^\dagger\Omega_{\overline{A}/k,\QQ}  \end{equation}
and we show following \cite{DavisLangerZink} that it factors through the derived global section functor.

\begin{lem}\label{LemComp} 
The last morphism factors canonically through a morphism 
$$\psi: R\Gamma\left(V,j^\dagger(\edg_V\otimes\Omega_V)\right)\rightarrow\edg_F\otimes W^\dagger\Omega_{\overline{A}/k,\QQ}.$$
\end{lem}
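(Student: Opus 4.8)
The plan is to repeat, up to the extra tensor factor $\edg$, the argument by which Davis, Langer and Zink upgrade the naive comparison map (\ref{DLZCompRigLoc}) to a morphism out of $R\Gamma(V,j^\dagger\Omega_V)$. By construction the map $\psi$ of (\ref{CompRigCoeff}) --- obtained from the degree-zero map $\Gamma(V,j^\dagger\OO_V)\to W^\dagger(\overline{A})$, the universal property of K\"ahler differentials, and the connection $\nabla'$ of the preceding lemma --- is already a genuine morphism of complexes, whose source is the complex of \emph{naive} global sections $\Gamma\!\left(V,j^\dagger(\edg_V\otimes\Omega^{_{^\bullet}}_V)\right)$. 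There is a canonical morphism of complexes
$$\Gamma\!\left(V,j^\dagger(\edg_V\otimes\Omega^{_{^\bullet}}_V)\right)\longrightarrow R\Gamma\!\left(V,j^\dagger(\edg_V\otimes\Omega_V)\right),$$
and the content of the lemma is that this map is a quasi-isomorphism; granting that, composing its inverse in the derived category with $\psi$ yields the asserted morphism, which is canonical because both ingredients are.

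Hence the only thing to verify is the degreewise vanishing $\h^i\!\left(V,j^\dagger(\edg_V\otimes\Omega^d_V)\right)=0$ for all $i>0$ and all $d$. For the structure sheaf this is precisely the vanishing $\h^i(V,j^\dagger\Omega^d_V)=0$ used in the coefficient-free case: the strict neighbourhood $V$ may be chosen so that coherent $j^\dagger$-modules on it are $\Gamma$-acyclic (the rigid-analytic counterpart of the Zariski vanishing invoked in the proof of Proposition~\ref{PropKerCoker}; in the dagger-space language it reflects that the relevant space over the affine base is quasi-Stein). Since $\edg$ is locally free of finite rank and $\Omega^d_V$ is locally free, $\edg_V\otimes\Omega^d_V$ is again coherent, so the same vanishing applies to it. Working Zariski-locally on $X=\Spec\overline{A}$, where $\edg$ becomes a finite free module and the situation reduces literally to the one treated by Davis, Langer and Zink tensored with a free module --- just as the freeness of $\widetilde{E}$ was exploited in the proof of Proposition~\ref{PropKerCoker} --- and then gluing, produces the required acyclicity.

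It then remains only to note that the resulting morphism is independent of the auxiliary strict neighbourhood, which is automatic since $\psi$ is built from the direct limit defining $j^\dagger$ and is therefore compatible with restriction to any smaller $V'\subset V$; this is what the word \emph{canonically} records. (Independence of the chosen overconvergent Witt frame and Frobenius lift --- the global rational analogue of Corollaries~\ref{CorChainHomo} and \ref{CorFunctorial} --- is a separate matter, to be handled afterwards.) I expect the main obstacle to be pinning down the cohomological vanishing on $V$: one must choose the strict neighbourhood with care and invoke the correct coherence and acyclicity input (Grosse-Kl\"onne's dagger-space machinery, or Kiehl's theorem together with Berthelot's properties of $j^\dagger$). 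Everything after that --- extending $\psi$ to the de Rham complex through $\nabla'$ and passing to the derived category --- is formal, and the locally free coefficient $\edg$ changes nothing essential, because locally it is just a finite free module and coherence is preserved under tensoring with it.
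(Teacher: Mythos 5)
Your route is logically fine but genuinely different from the paper's, and it hangs on a stronger claim than the paper actually establishes. You propose to prove that the canonical map
$$\Gamma\!\left(V,j^\dagger(\edg_V\otimes\Omega^{_{^\bullet}}_V)\right)\longrightarrow R\Gamma\!\left(V,j^\dagger(\edg_V\otimes\Omega^{_{^\bullet}}_V)\right)$$
is a quasi-isomorphism --- i.e.\ that each $j^\dagger(\edg_V\otimes\Omega^d_V)$ is $\Gamma$-acyclic on $V$ --- and then to obtain the factorisation by inverting it in the derived category. The paper does something weaker and more hands-on: following \cite[(4.29)]{DavisLangerZink}, it fixes a cofinal subsystem $\{W'\}$ of quasi-compact affinoid strict neighbourhoods of $]X[_{\widehat{F}}$ inside $V$, uses the restriction maps to produce
$$R\Gamma\!\left(V,j^\dagger(\edg_V\otimes\Omega_V)\right)\rightarrow\varinjlim R\Gamma\!\left(W',\edg_{W'}\otimes\Omega_{W'}\right)\cong\varinjlim\Gamma\!\left(W',\edg_{W'}\otimes\Omega_{W'}\right)\rightarrow\edg\otimes_\psi\!\left(W^\dagger\Omega_{\overline{A}/k}\otimes\QQ\right),$$
where the middle identification is just Kiehl's acyclicity for coherent modules on the affinoids $W'$, and then observes that pre-composing with $\Gamma\to R\Gamma$ and the identification $\Gamma(V,j^\dagger(\cdot))=\varinjlim\Gamma(W',\cdot)$ recovers $\psi$. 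This proves the factorisation without ever asserting the acyclicity of the $j^\dagger$-module on the whole (non-quasi-compact) $V$, which is precisely the point your argument still has to nail down.

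Two concrete cautions. First, your claim ``the content of the lemma is that $\Gamma\to R\Gamma$ is a quasi-isomorphism'' overstates the lemma: the lemma only asks for a factoring morphism, and the paper supplies one without establishing the quasi-isomorphism. If you do want the acyclicity of $j^\dagger$-coherent modules on $V$, you should cite a precise statement (Berthelot's cohomological properties of $j^\dagger$, or Grosse-Kl\"onne's results for dagger/quasi-Stein spaces), because it is exactly the kind of commutation of cohomology with the $j^\dagger$-colimit that the restriction to the affinoid subsystem $\{W'\}$ is designed to sidestep. Second, the paragraph where you ``work Zariski-locally on $X=\Spec\overline{A}$ \ldots and then glue'' to obtain the acyclicity conflates the Zariski topology of $X$ with the rigid (or dagger) topology of $V$: the sheaf cohomology you need to kill lives on the strict neighbourhood $V$, not on the scheme $X$, and Zariski-local freeness of $\edg$ does not by itself give vanishing of higher rigid cohomology on $V$. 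The appeal to Proposition~\ref{PropKerCoker} is likewise misplaced: that vanishing is a Zariski-sheaf statement on $\Spec\overline{A}$, not a rigid-analytic one on $V$. What you actually need is that $\edg_V\otimes\Omega^d_V$ is a \emph{coherent} $\OO_V$-module (which it is, since $\edg$ is locally free of finite rank and $\Omega^d_V$ is coherent), so that Kiehl applies on each affinoid $W'$, exactly as in the paper. With those repairs, the coefficient part of your argument --- that tensoring with a locally free $\edg$ preserves coherence and hence the relevant acyclicity --- is correct and matches the paper's use of local freeness.
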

\begin{proof} 
Fix the strict neighbourhood $V$ of $]X[_{\widehat{F}}$ in $F^{\text{an}}_K$. The argumentation of \cite[(4.29)]{DavisLangerZink} can be transferred to the current situation verbatim. The morphism (\ref{CompRigCoeff}), as well as the original one, is defined using a fundamental system of quasicompact strict neighbourhoods of $]X[_{\widehat{F}}$ in $V$. For an explicit description of these affinoids see \cite{DavisLangerZink}. For a suitable subsystem of strict neighbourhoods $\{V_{\lambda,\eta}\}_{\lambda,\eta}$  we obtain a composition 
\begin{eqnarray*} R\Gamma\left(V,j^\dagger(\edg_V\otimes\Omega_V)\right)&\rightarrow& \varinjlim R\Gamma\left(V_{\lambda,\eta},(\edg_{V_{\lambda,\eta}}\otimes\Omega_{V_{\lambda,\eta}})\right)\cong\\
&& \cong \varinjlim \Gamma\left(V_{\lambda,\eta},(\edg_{V_{\lambda,\eta}}\otimes\Omega_{V_{\lambda,\eta}})\right)\rightarrow \edg_F\otimes W^\dagger\Omega_{\overline{A}/k,\QQ},\end{eqnarray*}
where the first morphism is induced by restriction. 
\end{proof}

As a consequence, we have defined for each overconvergent Witt frame $(X,F,\varkappa)$ a morphism
\begin{equation}\label{CompEdgFrames} R\Gamma_{\text{rig}}(X,\edg)\rightarrow\edg_F\otimes W^\dagger\Omega_{\overline{A}/k,\QQ}.\end{equation}
A priori, this depends by the very nature of its construction on the choice of a Witt frame, albeit the construction is functorial in the triple $(X,F,\varkappa)$. 

However, Davis, Langer and Zink show in the rank one case, that first of all the obtained morphism is independent of the choice of the Witt frame, and moreover it induces an isomorphism. 

\begin{prop}\label{Prop431} The morphism from Lemma \ref{LemComp} for overconvergent Witt frames is a quasi-isomorphism. The induced morphism (\ref{CompEdgFrames}) is independent of the choice of the Witt frame. \end{prop}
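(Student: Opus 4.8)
The plan is to mimic, with coefficients, the two-step argument that Davis, Langer and Zink carry out for trivial coefficients in \cite[Section 4]{DavisLangerZink}, relying on the affine comparison established in Section \ref{Sec2} of this paper.

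\medskip

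\emph{Step 1: reduce to the affine situation via a \v{C}ech argument.} First I would choose a finite covering $\{U_i=\Spec\overline{A}_i\}$ of $X$ by smooth affine opens that are standard \'etale affines, such that all finite intersections $U_{i_0\cdots i_r}$ are again standard \'etale affines (this is possible by the result of Kedlaya \cite{Kedlaya3}, exactly as in the proof of Proposition \ref{PropKerCoker}). For each index tuple, restrict the isocrystal $\edg$ and choose an overconvergent Witt frame $(U_{i_0\cdots i_r},F_{i_0\cdots i_r},\varkappa_{i_0\cdots i_r})$; functoriality of the construction \eqref{CompEdgFrames} in the triple gives, for each such open, a local comparison morphism $R\Gamma_{\text{rig}}(U_{i_0\cdots i_r},\edg)\to\edg\otimes(W^\dagger\Omega_{\overline{A}_{i_0\cdots i_r}/k}\otimes\QQ)$, and these assemble into a morphism of \v{C}ech double complexes. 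Since rigid cohomology can be computed via such a \v{C}ech resolution \cite{DavisLangerZink}, and the overconvergent de Rham--Witt complex is a Zariski sheaf (so its hypercohomology is likewise computed by the \v{C}ech complex of the covering), the global morphism \eqref{CompEdgFrames} is the totalisation of the local ones. It therefore suffices to prove the two assertions \emph{locally}, on each standard \'etale affine piece.

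\medskip

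\emph{Step 2: the local quasi-isomorphism.} On a standard \'etale affine $U=\Spec\overline{A}$ the realisation $\edg_V$ is a free $\OO_V$-module with integrable connection, and $\edg'=\edg\otimes(W^\dagger(\overline{A})\otimes\QQ)$ is the associated free module over $W^\dagger(\overline{A})\otimes\QQ$ with the connection $\nabla'$ constructed above. Here I would invoke the two comparisons that link the dagger-space / strict-neighbourhood picture with the Monsky--Washnitzer picture: on the one hand, by Grosse-Kl\"onne \cite{GrosseKloenne} and Corollary \ref{Cor313}, $R\Gamma(V,j^\dagger(\edg_V\otimes\Omega_V))$ computes $R\Gamma_{\text{rig}}(U,\edg)$, which by the standard identification agrees with the Monsky--Washnitzer complex $\widetilde{E}\otimes_{A^\dagger}\Omega_{A^\dagger/W(k)}\otimes\QQ$ attached to the induced $A^\dagger$-module with connection $(\widetilde{E},\widetilde{\nabla})$; on the other hand, $\edg'\otimes(W^\dagger\Omega_{\overline{A}/k}\otimes\QQ)$ is exactly $E\otimes_{W^\dagger(\overline{A})}W^\dagger\Omega_{\overline{A}/k}\otimes\QQ$ with $E=\widetilde{E}\otimes_{t_F}W^\dagger(\overline{A})$. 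Under these identifications the morphism $\psi$ of Lemma \ref{LemComp} becomes (rationally) the map $t_F$ studied in Section \ref{Sec2}. By Proposition \ref{PropStandardComp} this $t_F$ is a quasi-isomorphism on standard \'etale affines, so \eqref{CompEdgFrames} is a quasi-isomorphism locally, hence globally by Step 1.

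\medskip

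\emph{Step 3: independence of the Witt frame.} For two overconvergent Witt frames over the same standard \'etale affine, the two resulting morphisms differ, after the identifications of Step 2, by a change of Frobenius lift (and of the degree-zero map $\varkappa$), and Corollary \ref{CorChainHomo} together with Corollary \ref{CorFunctorial} shows that $p^\kappa t_F$ and $p^\kappa t_{F'}$ are chain homotopic; rationally this makes $t_F$ and $t_{F'}$ induce the same map in the derived category. Glueing these local homotopies over the \v{C}ech nerve (using Corollary \ref{CorFunctorial} on the overlaps to make the homotopies compatible, up to the universal factor $p^\kappa$ which becomes invertible rationally) yields that the global morphism \eqref{CompEdgFrames} is independent of all choices. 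The main obstacle I anticipate is not any single step but the bookkeeping in Step 1 and Step 3: one must check that the local comparison maps, the local homotopies, and the identification with the Monsky--Washnitzer side are all compatible with the \v{C}ech restriction maps and with the transition isomorphisms between the various $E$'s coming from different Frobenius lifts, so that they genuinely totalise to a well-defined morphism and a well-defined homotopy in the derived category of Zariski sheaves on $X$. Everything else is a faithful transcription of \cite[Section 4]{DavisLangerZink} with the extra tensor factor $\edg$ carried along, which is harmless because $\edg$ is locally free.
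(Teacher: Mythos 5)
Your overall strategy -- identify the map of Lemma~\ref{LemComp} rationally with the Monsky--Washnitzer comparison of Section~\ref{Sec2} and then import the affine results -- is the right one, and Steps~2 and~3 capture the essential ideas. But the organisation differs from the paper's in ways worth noting.

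First, the logical order is reversed. The paper proves \emph{independence first}, and it does so not by re-running the chain homotopies of Corollaries~\ref{CorChainHomo} and~\ref{CorFunctorial}, but by observing that the proof of \cite[Proposition 4.31]{DavisLangerZink} in the case of trivial coefficients relies only on the \emph{functoriality of rigid cohomology} (plus the canonical isomorphism between the target modules $\edg\otimes_\psi(W^\dagger\Omega_{\overline{A}/k}\otimes\QQ)$ attached to different Frobenius lifts) and so carries over verbatim once the coefficient sheaf is carried along. Having independence in hand, the paper is then free to \emph{choose} a Witt frame of the form $(\overline{A},A,\varkappa_F)$ coming from a Frobenius lift and cite Proposition~\ref{PropKerCoker} to get the quasi-isomorphism rationally. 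Your Step~3 does independence by hand via chain homotopies; this can be made to work, but you end up having to glue local homotopies over a \v{C}ech nerve while keeping them compatible with the transition isomorphisms between the various $E$'s -- exactly the bookkeeping you flag at the end as the main obstacle. The paper's route sidesteps this entirely.

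Second, your Step~1 is redundant and slightly out of scope. The proposition is stated for an \emph{affine} $X=\Spec\overline{A}$; the \v{C}ech reduction to standard \'etale affines with Kedlaya's covering result, the sheaf-cohomology vanishing of Meredith and of \cite[Proposition 1.2]{DavisLangerZink}, and the comparison with $t_F$ on the pieces are all already packaged inside Proposition~\ref{PropKerCoker}. Re-proving them here is not wrong, but you should just cite that proposition; the globalisation to a quasi-projective $X$ is a separate step, handled in Proposition~\ref{Prop435} and the main theorem, not inside Proposition~\ref{Prop431}.

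In short: the approach is workable, but it inverts the paper's order, replaces a clean functoriality argument with a homotopy-glueing argument that needs extra care, and duplicates the content of Proposition~\ref{PropKerCoker}. If you prove independence first via DLZ's functoriality argument and then invoke Proposition~\ref{PropKerCoker} directly, the whole proof collapses to a few lines.
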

\begin{proof} As in the rank one case (cf. \cite[Proof of Prop. 4.31]{DavisLangerZink}), the independence of the choice of Witt frame follows from the fact, that the construction of the morphism is functorial in $(X,F,\varkappa)$. 

Consequently we may choose a suitable overconvergent Witt frame in order to show that the morphism in question is in fact a quasi-isomorphism. Indeed, according to Prop. \ref{Prop225} there exists an overconvergent Witt frame $(\overline{A},A,\varkappa_F)$ such that the associated morphism
$$H^i_{\text{rig}}(X,\edg)\rightarrow\HH^i(\edg_F\otimes(W^\dagger\Omega_{\overline{A}/k}\otimes\QQ))$$
is an isomorphism.\end{proof}

In particular, the $\edg_F\otimes(W^\dagger\Omega_{\overline{A}/k,\QQ}$ with varying $(X,F,\varkappa)$ are quasi-isomorphic, and hence we omit $F$ in the notation if no confusion arises. 

\subsection{Global results}\label{Subsec3.3}

We follow the argumentation of \cite{DavisLangerZink} to use dagger spaces to globalise this result. They argue that an overconvergent Witt frame (in fact any special frame) $(X,F,\varkappa)$ gives rise to a dagger space structure on the rigid analytic space $]X[_{\widehat{F}}$. We denote it by $]X[^\dagger_{\widehat{F}}$. According to Corollary \ref{Cor313} we have for an overconvergent isocrystal $\edg$
$$R\Gamma(]X[^\dagger_{\widehat{F}},\edg_{]X[_{\widehat{F}}^\dagger}\otimes\Omega_{]X[_{\widehat{F}}^\dagger})=R\Gamma_{\text{rig}}(X,\edg).$$
Moreover, Davis, Langer and Zink point out that the dagger space $]X[_{\widehat{F}}^\dagger$ is functorial in the special frame $(X,F)$. Hence we can rewrite the comparison map in terms of dagger spaces
$$\Gamma(]X[_{\widehat{F}}^\dagger,\edg_{]X[_{\widehat{F}}^\dagger}\otimes\Omega_{]X[_{\widehat{F}}^\dagger})\rightarrow\edg_F\otimes W^\dagger\Omega_{\overline{A}/k,\QQ}$$
and obtain simultaneously a local version via the specialisation map
$$\sP_\ast \edg_{]X[_{\widehat{F}}^\dagger}\otimes\Omega_{]X[_{\widehat{F}}^\dagger}\rightarrow \edg_F\otimes W^\dagger\Omega_{X/k,\QQ}.$$

We come now to the generalisation to an arbitrary smooth quasiprojective scheme $X$ over $k$. This works essentially in the same way as without coefficients. Because $X$ is quasiprojective, we may choose a finite covering $X=\bigcup_{i\in I} X_i$ of $X$ by standard smooth affine schemes over $k$, such that the intersections
$$X_J=X_{i_1}\cap\ldots\cap X_{i_\ell}$$
for a subset $J=\{i_1,\ldots,i_\ell\}\subset I$ are again standard smooth in the sense of \cite[Def. 4.33]{DavisLangerZink}. Denote $X_J=\Spec\overline{A}_J$. It is possible to lift a standard smooth algebra over $k$ to an algebra over $W(k)$ which is again standard smooth. Thus, choose for each $\overline{A}_i$ a standard smooth lift $B_i$ over $W(k)$ and set $F_i=\Spec B_i$ to obtain a special frame $(X_i,F_i)$. Then for any subset $J\subset I$ the closed embedding
$$X_J\rightarrow\prod_{i\in J}F_i=: F_J$$
is a special frame. 

\begin{prop}\label{Prop435} 
Let $\mathcal{Q}=]X_J[_{\widehat{F}_J}^\dagger$ be the dagger space associated to the special frame $(X_J,F_J)$ and $\sP: Q\rightarrow X_J$ the specialisation map. Then the induced morphism
$$\sP_\ast\edg_{\mathcal{Q}}\otimes\Omega^{_{^\bullet}}_{\mathcal{Q}}\rightarrow R\sP_\ast\edg_{\mathcal{Q}}\otimes\Omega^{_{^\bullet}}_{\mathcal{Q}}$$
is a quasi-isomorphism.
\end{prop}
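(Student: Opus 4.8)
The plan is to deduce this from the acyclicity of coherent sheaves on quasi-Stein dagger spaces, thereby reducing the whole statement to the coefficient-free situation treated in \cite{DavisLangerZink}; the only genuinely new ingredient is the observation that the locally free isocrystal $\edg$ realises as a coherent $\OO_{\mathcal{Q}}$-module.

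First I would record that, by the equivalence recalled above between overconvergent isocrystals and coherent modules with integrable connection on the associated partially proper dagger space, the restriction of $\edg$ to $\mathcal{Q}=\;]X_J[^\dagger_{\widehat{F}_J}$ is a locally free --- hence coherent --- $\OO_{\mathcal{Q}}$-module. Consequently $\edg\otimes\Omega^j_{\mathcal{Q}}$ is a coherent $\OO_{\mathcal{Q}}$-module for each $j$, so that $\edg\otimes\Omega^{\bullet}_{\mathcal{Q}}$ is a bounded complex of coherent sheaves; the fact that its differentials (given by the connection) fail to be $\OO_{\mathcal{Q}}$-linear plays no role, since the computation of $R\sP_\ast$ of a bounded-below complex only uses that the individual terms are $\sP_\ast$-acyclic.

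So the heart of the argument is to show that every coherent $\OO_{\mathcal{Q}}$-module $\mathscr{F}$ is $\sP_\ast$-acyclic, i.e. $R^i\sP_\ast\mathscr{F}=0$ for $i>0$; granting this, a bounded complex of $\sP_\ast$-acyclic sheaves computes $R\sP_\ast$, and the natural map $\sP_\ast(\edg\otimes\Omega^{\bullet}_{\mathcal{Q}})\to R\sP_\ast(\edg\otimes\Omega^{\bullet}_{\mathcal{Q}})$ is a quasi-isomorphism. For the acyclicity I would note that $R^i\sP_\ast\mathscr{F}$ is the Zariski sheaf on $X_J$ associated to the presheaf $U\mapsto H^i(\sP^{-1}(U),\mathscr{F})$, so it suffices to treat a basis of Zariski opens $U\subseteq X_J$. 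For $U$ a principal open in the standard smooth affine $X_J$, the tube $\sP^{-1}(U)=\;]U[^\dagger_{\widehat{F}_J}$ is a quasi-Stein dagger space --- it is the rising union of the affinoid dagger subspaces obtained by intersecting the tube of $U$ with the affinoid exhaustion of $\mathcal{Q}$ --- and this is exactly the geometric input used by Davis, Langer and Zink in the case without coefficients, and is insensitive to $\edg$. By the dagger-space version of Kiehl's theorem (Grosse-Kl\"onne \cite{GrosseKloenne}), coherent sheaves on a quasi-Stein dagger space have vanishing higher cohomology, whence $H^i(\sP^{-1}(U),\mathscr{F})=0$ for $i>0$ and thus $R^i\sP_\ast\mathscr{F}=0$. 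Applying this with $\mathscr{F}=\edg\otimes\Omega^j_{\mathcal{Q}}$ concludes.

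I do not expect a serious obstacle: the only place the coefficients intervene is the coherence of the realisation of $\edg$ on $\mathcal{Q}$, and once that is in place the argument is a formal copy of the coefficient-free one, the acyclicity of coherent sheaves on the quasi-Stein tubes being blind to the coefficient sheaf.
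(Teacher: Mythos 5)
Your proposal takes a genuinely different and substantially shorter route than the paper, and I don't think it closes: the load-bearing step --- that every coherent sheaf on the tube $\sP^{-1}(U)$ is $\sP_\ast$-acyclic, i.e. $H^i=0$ for \emph{all} $i\geqslant 1$ --- is precisely the assertion that the paper, following Davis--Langer--Zink, does \emph{not} have and works around. Look at the ingredients of Lemma~\ref{Prop437}: part~(c) gives $\h^i(\mathcal{Q}\times\breve{D}^n,\Omega^q)=0$ only for $i\geqslant 2$, while part~(b) asserts, separately and nontrivially, that the complex $\h^1(\mathcal{Q}\times\breve{D}^n,\Omega^{\bullet})$ is acyclic \emph{as a complex}. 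If $H^1$ of a coherent sheaf on these spaces vanished, (b) would be vacuous and (c) would be stated for $i\geqslant 1$; the fact that the paper carefully treats the $H^1$-level contribution as a potentially nonzero complex whose acyclicity must be proven is a strong signal that the blanket vanishing you invoke is not available. The obstruction is the usual $\varprojlim^1$ of global sections over a quasi-Stein exhaustion: on a rigid quasi-Stein space this is killed by a density/Mittag--Leffler argument, but for the \emph{dagger} structure on $\mathcal{Q}\times\breve{D}^n$ this is not a routine consequence of ``Kiehl's theorem for dagger spaces'', and the reference \cite{GrosseKloenne} you cite establishes Tate acyclicity for affinoid dagger spaces rather than the quasi-Stein statement your argument needs.

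A second, smaller gap: you apply the quasi-Stein acyclicity to $\sP^{-1}(U)$ directly, without the strong-fibration reduction. For the simplicial frames $F_J=\prod_{i\in J}F_i$ the tube $]X_J[_{\widehat{F}_J}$ is not obviously an affinoid dagger space times an open polydisc --- the reduction to the form $\mathcal{Q}\times\breve{D}^n$ (with $\mathcal{Q}$ affinoid dagger) is exactly what the strong fibration theorem buys, and without it even the geometric shape of $\sP^{-1}(U)$, let alone its dagger-Stein property, is unverified. The paper's argument therefore has two moving parts your proposal suppresses: the fibration reduction to the product situation, and the spectral-sequence analysis that degenerates despite the possible nonvanishing of $R^1\sP_\ast$ of the individual terms. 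Your observation that the coherence of $\edg\otimes\Omega^j_{\mathcal{Q}}$ (and the irrelevance of the non-linearity of the connection) is the only place coefficients enter is correct and is indeed what the paper uses; but it is grafted onto an acyclicity claim that is not established in the dagger setting and is not what the source argument proves.
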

\begin{proof} 
According to the proof of \cite[Prop. 4.35]{DavisLangerZink} the strong fibration theorem for dagger spaces implies that for two liftings $F_1$ and $F_2$ of an affine smooth scheme $Z$ over $k$ such that there is a morphism of frames $\nu:(Z,F_1)\rightarrow (Z,F_2)$ which restricts to the identity on $Z$, the dagger spaces associated to the frames are isomorphic. This ultimately allows us to restrict to the special case, where $F_J$ is of the form $F_{i_0}'\times\aA^n_{W(k)}$ with $F_{i_0}'$ a localisation of a lift $F_{i_0}$ of $X_{i_0}$, for some $i_0\in J$. 

Therefore let $(X_J,F_J)=(X_J,F_{i_0}'\times\aA^n_{W(k)})$ be of this particular form. The associated dagger space to this frame is $\mathcal{Q}\times\breve{D}^n$ where $\breve{D}^n$ designates the open unit ball of dimension $n$ with its natural dagger space structure.  Let $\sP:\mathcal{Q}\times\breve{D}^n\rightarrow Z$ be the specialisation map. As in \cite[Cor. 4.38]{DavisLangerZink} we see that the natural morphism
$$\sP_\ast\edg_{\mathcal{Q}\times\breve{D}^n}\otimes\Omega^{_{^\bullet}}_{\mathcal{Q}\times\breve{D}^n}\rightarrow R\sP_\ast\edg_{\mathcal{Q}\times\breve{D}^n}\otimes\Omega^{_{^\bullet}}_{\mathcal{Q}\times\breve{D}^n}$$
is a quasi-isomorphism. Indeed, consider the spectral sequence of hypercohomology
$$\mathscr{H}^q(R^p \sP_\ast\edg_{\mathcal{Q}\times\breve{D}^n}\otimes\Omega^{_{^\bullet}}_{\mathcal{Q}\times\breve{D}^n})\Rightarrow R^{p+q}\sP_\ast\edg_{\mathcal{Q}\times\breve{D}^n}\otimes\Omega^{_{^\bullet}}_{\mathcal{Q}\times\breve{D}^n}.$$
Because the de Rham complex is locally free, as is by hypothesis the overconvergent isocrystal $\edg$, we may choose an open affine subset $U\subset X_J$ small enough such that for the pre-image $\mathscr{U}\subset{\mathcal{Q}}$ under the specialisation morphism, which is an affinoid dagger space, $\edg_{\mathscr{U}}\otimes\Omega^{_{^\bullet}}_{\mathscr{U}}$ is free. Then the subsequent Lemma \ref{Prop437} shows that the complexes  $H^p(\mathscr{U}\times\breve{D}^n,\edg\otimes\Omega^{_{^\bullet}}_{\mathscr{U}\times\breve{D}^n})$ are acyclic for $p\geqslant 1$. Thus the same holds for the complex $R^p \sP_\ast\edg\otimes\Omega^{_{^\bullet}}_{\mathcal{Q}\times\breve{D}^n}$ and the spectral sequence degenerates. 

This proves that the morphism in question is indeed a quasi-isomorphism and finishes the proof of the proposition.\end{proof}

It remains to show the following statement.
\begin{lem}\label{Prop437} Let $\breve{D}$ be the one dimensional open unit ball with natural dagger space structure. Let $\mathcal{Q}=\Sp^\dagger A$ be a smooth affinoid dagger space, such that $\Omega^1_{\mathcal{Q}}$ is a free $\OO_{\mathcal{Q}}$-algebra and $\edg_{\mathcal{Q}}$ is a free $\OO_{\mathcal{Q}}$-module. 
\renewcommand{\labelenumi}{(\alph{enumi})}\begin{enumerate} \item The canonical morphism
$$H^0(\mathcal{Q},\edg_{\mathcal{Q}}\otimes\Omega^{_{^\bullet}}_{\mathcal{Q}})\rightarrow H^0(\mathcal{Q}\times\breve{D}^n,\edg_{\mathcal{Q}\times\breve{D}^n}\otimes\Omega^{_{^\bullet}}_{\mathcal{Q}\times\breve{D}^n})$$
is a quasi-isomorphism of complexes.
\item The complex $H^1(\mathcal{Q}\times\breve{D}^n,\edg_{\mathcal{Q}\times\breve{D}^n}\otimes \Omega^{_{^\bullet}}_{\mathcal{Q}\times\breve{D}^n})$ is acyclic.
\item $H^i(\mathcal{Q}\times\breve{D}^n,\edg_{\mathcal{Q}\times\breve{D}^n}\otimes\Omega^q_{\mathcal{Q}\times\breve{D}^n})=0$ for $i\geqslant 2$ and all $q$.\end{enumerate}\end{lem}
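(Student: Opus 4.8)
The plan is to read this lemma as the coefficient version of the open (poly)ball Poincar\'e lemma together with acyclicity of coherent cohomology on a quasi-Stein dagger space; because $\edg_{\mathcal{Q}}$ and $\Omega^1_{\mathcal{Q}}$ are assumed free, the coefficient-free argument of \cite[Section 4]{DavisLangerZink} should go through essentially unchanged, the isocrystal $\edg$ entering only as a free multiplicity. Throughout I write $A=\OO(\mathcal{Q})$ and $\OO(\mathcal{Q}\times\breve{D}^n)=A\langle\!\langle T_1,\dots,T_n\rangle\!\rangle$ for the ring of functions on $\mathcal{Q}$ analytic on the open polyball in the $T_i$.

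First I would settle (b) and (c). The dagger space $\mathcal{Q}\times\breve{D}^n$ is quasi-Stein: it is the increasing union of the affinoid dagger subspaces $\mathcal{Q}\times B^\dagger(0,r)^n$ for $r\uparrow 1$, where $B^\dagger(0,r)$ denotes the overconvergent disc of radius $r$, and the transition maps on rings of global sections have dense image. Hence the dagger version of Kiehl's Theorem~B for quasi-Stein spaces (Grosse-Kl\"onne \cite{GrosseKloenne}) gives $\h^i(\mathcal{Q}\times\breve{D}^n,\mathscr{F})=0$ for all $i\geq 1$ and every coherent $\OO_{\mathcal{Q}\times\breve{D}^n}$-module $\mathscr{F}$. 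Since $\Omega^1_{\mathcal{Q}}$ and $\edg_{\mathcal{Q}}$ are free, each $\edg\otimes\Omega^q_{\mathcal{Q}\times\breve{D}^n}$ is locally free, hence coherent, so $\h^i(\mathcal{Q}\times\breve{D}^n,\edg\otimes\Omega^q_{\mathcal{Q}\times\breve{D}^n})=0$ for $i\geq 1$ and all $q$. In particular the complex in (b) is identically zero, a fortiori acyclic, and (c) is immediate.

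For (a) I would argue as follows. By the vanishing just established, together with affinoidness of $\mathcal{Q}$ on the source side, both occurrences of $R\Gamma$ are computed by global sections, and the claim becomes that the map of complexes
$$M\otimes_A\Omega^\bullet_A\ \longrightarrow\ \big(M\otimes_A A\langle\!\langle T_1,\dots,T_n\rangle\!\rangle\big)\otimes\Omega^\bullet_{\mathcal{Q}\times\breve{D}^n}$$
induced by $\pi^*$ is a quasi-isomorphism, where $M=\h^0(\mathcal{Q},\edg)$ is the free $A$-module of global sections with its integrable connection $\nabla$ and the connection on the right is $\nabla\otimes\id+\id\otimes d_T$ with $d_T=\sum_i\partial_{T_i}\,dT_i$. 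The target is the total complex of the double complex whose columns are the de Rham complex of $(M,\nabla)$ scalar-extended along $A\to A\langle\!\langle T_1,\dots,T_n\rangle\!\rangle$ and whose rows are the Koszul complex of the commuting operators $\partial_{T_1},\dots,\partial_{T_n}$; filtering by the $\Omega_A$-degree of the columns reduces (a) to the statement that this Koszul complex has cohomology $A$ in degree $0$ and is exact in positive degrees, that is, to the Poincar\'e lemma for the open polyball. Iterating the Koszul construction one variable at a time, this reduces in turn to the one-variable assertion that for each Fr\'echet ring $B\in\{A\langle\!\langle T_1,\dots,T_m\rangle\!\rangle : 0\leq m\leq n-1\}$ the sequence
$$0\longrightarrow B\longrightarrow B\langle\!\langle T\rangle\!\rangle\xrightarrow{\ \partial_T\ }B\langle\!\langle T\rangle\!\rangle\longrightarrow 0$$
is exact, where $B\langle\!\langle T\rangle\!\rangle$ consists of the power series $\sum_i b_iT^i$ overconvergent on the open unit disc. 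Exactness on the left and in the middle is immediate on comparing coefficients, using that $B$ is a $\QQ_p$-algebra so the integers $i+1$ are invertible. For surjectivity one takes the antiderivative $F=\sum_i\frac{b_i}{i+1}T^{i+1}$ of $g=\sum_i b_iT^i$ and checks it is again overconvergent: for every continuous seminorm $|\cdot|$ on $B$ and every $r<1$ choose $r<s<1$; since $v_p(i+1)\leq\log_p(i+1)$ one has $|i+1|_p^{-1}\leq i+1$, so
$$\Big|\tfrac{b_i}{i+1}\Big|\,r^{i+1}\ \leq\ |b_i|\,s^i\cdot(i+1)\,(r/s)^i\,r\ \longrightarrow\ 0\qquad(i\to\infty),$$
because $(i+1)(r/s)^i\to 0$ while $|b_i|\,s^i\to 0$. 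Feeding this back through the Koszul and filtration reductions and applying $R\Gamma(\mathcal{Q},-)$ yields (a).

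The step I expect to be the main obstacle is this surjectivity estimate together with the surrounding bookkeeping: it must be run uniformly over the affinoid base $\mathcal{Q}$ and, in the inductive step, over the whole family of seminorms defining the Fr\'echet algebras $A\langle\!\langle T_1,\dots,T_m\rangle\!\rangle$, and one must check that the product decomposition of the de Rham complex is compatible with the completed tensor products and with the sign convention in $\nabla\otimes\id+\id\otimes d_T$. What makes the estimate work is that we are on the \emph{open} ball, so the radius of convergence always has room to shrink and thereby absorb the at most linear loss incurred by dividing by $i+1$; on a closed disc the argument would break down. The coherent-cohomology vanishing used for (b), (c) and for replacing $R\Gamma$ by $\Gamma$ is, by contrast, routine once $\mathcal{Q}\times\breve{D}^n$ is recognised as quasi-Stein.
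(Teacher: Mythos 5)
The paper's proof is a short deferral: part (c) is cited directly from \cite[Prop.\ 4.35]{DavisLangerZink}, and for (a), (b) Ertl simply says that in the argument of Davis, Langer and Zink one may substitute $\edg\otimes\Omega$ for $\Omega$ since $\edg$ is free. Your approach is genuinely different for (b), and that is where I see a gap.

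For (b) you claim $\h^i(\mathcal{Q}\times\breve{D}^n,\mathscr{F})=0$ for all $i\geq 1$ and every coherent $\mathscr{F}$, citing a dagger Kiehl Theorem~B for quasi-Stein spaces, so that the complex in (b) is \emph{identically zero}. But this is strictly stronger than what the lemma asserts, and the careful formulation of (b) and (c) -- vanishing of $\h^i$ is claimed only for $i\geq 2$, while for $i=1$ only acyclicity of the complex $\h^1(\mathcal{Q}\times\breve{D}^n,\edg\otimes\Omega^{\bullet})$ is claimed -- is precisely a warning that $\h^1$ itself need not vanish termwise. The reason is that $\mathcal{Q}\times\breve{D}^n$ is exhausted by \emph{dagger} affinoids $\mathcal{Q}\times B^\dagger(0,r)^n$, whose rings of sections are ind-Banach (weakly complete) rather than Banach; the $\check{\mathrm C}$ech complex of this chain therefore produces an $\R^1\!\lim$ term whose vanishing would need a Mittag--Leffler argument for a tower of LF-spaces, which is not the Fr\'echet setting of Kiehl's theorem and is not supplied by \cite{GrosseKloenne}. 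Davis, Langer and Zink (and hence the paper) get around this by \emph{not} asserting $\h^1=0$: the linearly ordered cover gives $\h^{\geq 2}=0$ (your part (c), which is fine) and exhibits $\h^1$ as a $\lim^1$, and they then use the differential coming from the connection to show the resulting complex of $\lim^1$-terms is acyclic. Your proof of (b) omits this step entirely and replaces it with an appeal to a theorem whose dagger version is neither in the cited reference nor obviously true. If you want to keep your route you would have to prove a dagger quasi-Stein Theorem~B outright, which would in fact supersede part of the lemma; otherwise you should follow the $\lim^1$-computation in \cite[Prop.\ 4.37]{DavisLangerZink} with $\edg\otimes\Omega$ in place of $\Omega$, which goes through because $\edg$ is free.

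Parts (a) and (c) are in line with the intended argument. For (a), your reduction via the Koszul/filtration decomposition to the one-variable overconvergent antiderivative, and the estimate $|i+1|_p^{-1}\leq i+1$ absorbed by $(r/s)^i$, is correct and is essentially the Poincar\'e lemma that underlies \cite[Prop.\ 4.37]{DavisLangerZink}; you make it more explicit than the paper, which merely defers. One small caution: statement (a) compares the two $\h^0$-complexes, i.e.\ complexes of global sections, so invoking ``$R\Gamma$ is computed by global sections'' is not needed there and conflates (a) with the later use of the lemma in Proposition~\ref{Prop435}.
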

\begin{proof} The last statement is proved in \cite[Prop. 4.37]{DavisLangerZink} for an arbitrary abelian sheaf which is a coherent $\OO_{\mathcal{Q}\times\breve{D}^n}$-module.

For the remaining two assertions, we may replace the complex $\Omega^{_{^\bullet}}_{\mathcal{Q}\times\breve{D}^n}$ and  $\Omega^{_{^\bullet}}_{\mathcal{Q}}$ respectively in Davis,Langer and Zink's argument by the complex $\edg_{\mathcal{Q}\times\breve{D}^n}\otimes\Omega^{_{^\bullet}}_{\mathcal{Q}\times\breve{D}^n}$ and $\edg_{\mathcal{Q}}\otimes\Omega^{_{^\bullet}}_{\mathcal{Q}}$ which are both locally free modules with integrable connections, and then proceed accordingly. \end{proof}

We are now in a position to finish the proof of the main result.
\begin{theo}\label{Theo3.2.7} Let $X$ be a smooth quasiprojective scheme over $k$, and $\edg\in\Isoc^\dagger(X/W(k)$ an isocrystal. Then there is a natural quasi-isomorphism
$$R\Gamma_{\text{rig}}(X,\edg)\rightarrow R\Gamma(X,\edg\otimes W^\dagger\Omega^{_{^\bullet}}_{X/k\QQ}).$$
\end{theo}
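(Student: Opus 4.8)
The plan is to reduce the global statement to the local, affine results already established, via a \v{C}ech argument, exactly mirroring the structure of \cite[Section 4]{DavisLangerZink} but carrying the coefficient sheaf $\edg$ along throughout. First I would fix a finite covering $X=\bigcup_{i\in I}X_i$ by standard smooth affine schemes over $k$ whose intersections $X_J$ are again standard smooth, as discussed just before Proposition \ref{Prop435}, and for each $i$ a standard smooth lift $B_i$ over $W(k)$, giving special frames $(X_i,F_i)$ and $(X_J,F_J)$ for $F_J=\prod_{i\in J}F_i$. For each $J$ let $\mathcal{Q}_J=\,]X_J[^\dagger_{\widehat{F}_J}$ be the associated dagger space and $\sP_J:\mathcal{Q}_J\to X_J$ the specialisation map. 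By Corollary \ref{Cor313} (applied to the realisation of $\edg$ on the frame) we have $R\Gamma(\mathcal{Q}_J,\edg\otimes\Omega^{_{^\bullet}}_{\mathcal{Q}_J})=R\Gamma_{\text{rig}}(X_J,\edg)$, so the object on the left of the desired quasi-isomorphism is computed by the \v{C}ech bicomplex associated to the covering $\{\mathcal{Q}_J\}$.

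Next I would assemble the local comparison morphisms into a global one. For each $J$, Proposition \ref{Prop435} gives that $\sP_{J\ast}(\edg\otimes\Omega^{_{^\bullet}}_{\mathcal{Q}_J})\to R\sP_{J\ast}(\edg\otimes\Omega^{_{^\bullet}}_{\mathcal{Q}_J})$ is a quasi-isomorphism, so $R\Gamma_{\text{rig}}(X_J,\edg)=R\Gamma(X_J,\sP_{J\ast}\edg\otimes\Omega^{_{^\bullet}}_{\mathcal{Q}_J})$; and the local comparison map constructed after Lemma \ref{LemComp}, namely $\sP_\ast\edg\otimes\Omega_{]X_J[^\dagger_{\widehat{F}_J}}\to\edg\otimes(W^\dagger\Omega_{X_J/k}\otimes\QQ)$, together with Proposition \ref{Prop431}, shows this is a quasi-isomorphism of complexes of Zariski sheaves on $X_J$. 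Since the dagger space $]X[^\dagger_{\widehat{F}}$ is functorial in the special frame $(X,F)$ — the key input from \cite{DavisLangerZink} reproved in the coefficient setting in Proposition \ref{Prop435} via the strong fibration theorem — these local morphisms are compatible with restriction to smaller $X_J$, so they glue to a morphism of complexes of sheaves of double-complex type. Taking $R\Gamma$ and comparing the two \v{C}ech spectral sequences $E_1^{p,q}=\prod_{|J|=p+1}\h^q(X_J,\sP_{J\ast}\edg\otimes\Omega^{_{^\bullet}}_{\mathcal{Q}_J})\Rightarrow R^{p+q}\Gamma_{\text{rig}}(X,\edg)$ and $E_1^{p,q}=\prod_{|J|=p+1}\h^q(X_J,\edg\otimes(W^\dagger\Omega_{X_J/k}\otimes\QQ))\Rightarrow \h^{p+q}(X,\edg\otimes(W^\dagger\Omega_{X/k}\otimes\QQ))$, the local quasi-isomorphisms induce an isomorphism on $E_1$-terms, hence on abutments, which is the claim.

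The main obstacle, as the authors themselves flag in the remark after Proposition \ref{Prop431}, is the independence of the local comparison map on the choice of overconvergent Witt frame (equivalently, of Frobenius lift): one must know that the gluing data are coherent, i.e.\ that on overlaps $X_J$ the comparison maps arising from $F_J$ and from the various $F_{J'}\cap$-restrictions agree, at least in the derived category. Proposition \ref{Prop431} supplies exactly this — it rests only on functoriality of rigid cohomology with coefficients (Corollary \ref{Cor313}) and on the homotopy arguments of Corollaries \ref{CorChainHomo} and \ref{CorFunctorial}, which were carried through for free modules with connection in Proposition \ref{PropHomotopic} — so the \v{C}ech descent goes through unobstructed once one is careful that all the modules in sight ($\edg_{\mathcal{Q}_J}\otimes\Omega^{_{^\bullet}}_{\mathcal{Q}_J}$ and $\edg\otimes(W^\dagger\Omega_{X_J/k}\otimes\QQ)$) are locally free, which is guaranteed by the hypothesis that $\edg$ is a locally free isocrystal. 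I would close by remarking that the quasi-isomorphism is natural: the entire construction — covering, frames, dagger spaces, specialisation maps, comparison morphisms — is functorial in $X$ and in $\edg$, so the induced map on derived global sections is canonical and independent of all choices.
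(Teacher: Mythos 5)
Your proposal follows essentially the same route as the paper's: cover $X$ by standard smooth affines with standard smooth intersections, lift to obtain a simplicial system of special frames, pass to the associated dagger spaces, invoke Proposition \ref{Prop431} for the local quasi-isomorphism and Proposition \ref{Prop435} for $\sP_\ast \simeq R\sP_\ast$, then glue. The one step you gloss over is precisely the one non-trivial descent input: your assertion that the \v{C}ech spectral sequence
$E_1^{p,q}=\prod_{|J|=p+1}\h^q(X_J,\sP_{J\ast}\edg\otimes\Omega^{_{^\bullet}}_{\mathcal{Q}_J})$
converges to $R^{p+q}\Gamma_{\text{rig}}(X,\edg)$ is exactly the statement of cohomological descent for rigid cohomology with coefficients along a Zariski covering, which is a theorem of Chiarellotto and Tsuzuki \cite{ChiarellottoTsuzuki}, not a formal consequence of anything else in your argument. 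The Zariski-sheaf side of the comparison does satisfy \v{C}ech descent for free, but rigid cohomology is not sheaf hypercohomology on $X_{\text{Zar}}$ a priori, and the paper makes a point of invoking Chiarellotto--Tsuzuki explicitly (noting that their result applies to coherent sheaves with integrable connection, hence to $\edg$) for exactly this reason. You should name that input; once it is in place, your argument is complete and coincides with the paper's, the only cosmetic difference being that you phrase the descent via a \v{C}ech bicomplex and two spectral sequences while the paper phrases it via the augmented simplicial scheme $\epsilon:\mathfrak{X}_{^{_\bullet}}\rightarrow X$ and $R\epsilon_\ast$.
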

\begin{proof} Choose a finite covering $\{X_i\}_{i\in I}$ as above, and consider the associated simplicial scheme $\mathfrak{X}_{^{_\bullet}}=\{X_J\}_{J\subset I}$ together with the augmentation $\epsilon:\mathfrak{X}_{^{_\bullet}}\rightarrow X$. Upon choosing liftings of the $X_i$ over $W(k)$ one obtains a simplicial object of frames $(\mathfrak{X}_{^{_\bullet}},\mathfrak{F}_{^{_\bullet}})=\{(X_J,F_J)\}_{J\subset I}$ and as a consequence a simplicial object of dagger spaces $\mathfrak{Q}_{^{_\bullet}}=\{\mathcal{Q}_J\}_{J\subset I}$. As seen above, we have for each $J\subset I$ a comparison morphism
$$\sP_\ast\edg_{\mathcal{Q}_J}\otimes\Omega^{_{^\bullet}}_{\mathcal{Q}_J}\rightarrow \edg_{F_J}\otimes W^\dagger\Omega^{_{^\bullet}}_{X_J/k,\QQ}$$
which glues to a morphism of simplicial sheaves
\begin{equation}\sP_\ast\edg_{\mathfrak{Q}_{^{_\bullet}}}\otimes\Omega^{_{^\bullet}}_{\mathfrak{Q}_{^{_\bullet}}}\rightarrow \edg_{\mathfrak{F}_{^{_\bullet}}}\otimes W^\dagger\Omega^{_{^\bullet}}_{\mathfrak{X}_{^{_\bullet}}/k,\QQ}\end{equation}
and by Prop. \ref{Prop435} and \ref{Prop431} it gives rise to a quasi-isomorphism
$$R\sP_\ast\edg_{\mathfrak{Q}_{^{_\bullet}}}\otimes\Omega^{_{^\bullet}}_{\mathfrak{Q}_{^{_\bullet}}}\rightarrow \edg_{\mathfrak{F}_{^{_\bullet}}}\otimes W^\dagger\Omega^{_{^\bullet}}_{\mathfrak{X}_\bullet/k,\QQ}.$$
Because of functoriality in Witt frames, we may now apply the functor $R\epsilon_\ast$ to obtain
\begin{equation}\label{Equ441} R\epsilon_\ast R\sP_\ast\edg_{\mathfrak{Q}_{^{_\bullet}}}\otimes\Omega^{_{^\bullet}}_{\mathfrak{Q}_{^{_\bullet}}}\cong R\epsilon_\ast \edg_{\mathfrak{F}_{^{_\bullet}}}\otimes W^\dagger\Omega^{_{^\bullet}}_{\mathfrak{X}_{^{_\bullet}}/k,\QQ}\cong \edg_{\mathfrak{F}_{^{_\bullet}}}\otimes W^\dagger\Omega^{_{^\bullet}}_{X/k,\QQ}.\end{equation}
As pointed out in the proof of \cite[Thm. 4.40]{DavisLangerZink}, in the case without coefficients a result by Chiarellotto and Tsuzuki \cite{ChiarellottoTsuzuki} implies that the left-hand side of the equality (\ref{Equ441}) is a complex on $X$ whose hypercohomology is rigid cohomology. However, Chiarellotto's and Tsuzuki's results are a lot more general dealing with coherent sheaves with integrable connections. Thus they apply to our situation as well and we obtain
$$R\Gamma( R\epsilon_\ast R\sP_\ast\edg_{\mathfrak{Q}_{^{_\bullet}}}\otimes\Omega^{_{^\bullet}}_{\mathfrak{Q}_{^{_\bullet}}})\cong R\Gamma_{\text{rig}}(X,\edg)$$
which proves the theorem.
\end{proof}



\addcontentsline{toc}{section}{References}

\addcontentsline{toc}{section}{References}

\vspace{1cm}
\hrule
\vspace{.5cm}
{\noindent
\textsc{Universit\"at Regensburg}\\     	
Fakult\"at f\"ur Mathematik	\\
Universit\"atsstra{\ss}e 31	\\
93053 Regensburg		\\
Germany		      	\\
(+ 49) 941-943-2664       	\\
\verb=veronika.ertl@mathematik.uni-regensburg.de=\\
\url{http://www.mathematik.uni-regensburg.de/ertl/} \\}

\end{document}